\documentclass{amsart}

\usepackage{amsmath}
\usepackage{amsthm}
\usepackage{amsfonts}
\usepackage{amssymb}
\usepackage{enumerate}

\newcommand{\indicator}[1]{\ensuremath{\mathbf{1}_{\{#1\}}}}
\newcommand{\oindicator}[1]{\ensuremath{\mathbf{1}_{#1}}}

\newcommand{\E}{\mathbb{E}}
\newcommand{\Prob}{\mathbb{P}}
\newcommand{\C}{\mathbb{C}}
\newcommand{\R}{\mathbb{R}}

\newcommand{\T}{\mathrm{T}}  

\newcommand{\Tr}{\mathrm{Tr}}
\newcommand{\tr}{\mathrm{tr}}

\renewcommand\Re{\operatorname{Re}}
\renewcommand\Im{\operatorname{Im}}

\theoremstyle{plain}
  \newtheorem{theorem}{Theorem}

  \newtheorem{proposition}[theorem]{Proposition}
  
  \newtheorem{lemma}[theorem]{Lemma}

\theoremstyle{definition}
  \newtheorem{definition}[theorem]{Definition}
  
  \newtheorem{remark}[theorem]{Remark}

\begin{document}
\title[Fluctuations of Matrix Entries]{Fluctuations of Matrix Entries of Analytic Functions of Non-Hermitian Random Matrices}

\author[S. O'Rourke]{Sean O'Rourke}
\address{Department of Mathematics, Rutgers, Piscataway, NJ 08854  }
\email{sdo21@math.rutgers.edu}

\begin{abstract}
Consider an $n \times n$ non-Hermitian random matrix $M_n$ whose entries are independent real random variables.  Under suitable conditions on the entries, we study the fluctuations of the entries of $f(M_n)$ as $n$ tends to infinity, where $f$ is analytic on an appropriate domain.  This extends the results in \cite{ORS}, \cite{ORS2}, and \cite{PRS} from symmetric random matrices to the non-Hermitian case.  
\end{abstract}

\maketitle

\section{Introduction and Main Results}

There have been a number of results recently concerning matrix entries of functions of random matrices.  The results so far have dealt with symmetric (Hermitian) random matrix ensembles. That is, for a $n \times n$ real symmetric (Hermitian) random matrix, $M_n$, one studies the entries of the matrix $f(M_n)$ where $f$ is a regular test function.  

For a symmetric (Hermitian) matrix $M_n$, we define $f(M_n)$ using functional calculus.  That is, if $M_n$ is diagonalized as $M_n = V^\ast D V$ where $V$ is an orthogonal (unitary) matrix and $D$ is diagonal, then $f(M_n)$ is constructed as $f(M_n) = V^\ast f(D) V$ where $f(D)$ is again a diagonal matrix with the entries $f(D_{ii})$ on the diagonal.  

In \cite{LP}, Lytova and Pastur consider the case where $M_n$ is drawn from the Gaussian Orthogonal Ensemble (GOE) or Gaussian Unitary Ensemble (GUE).  The more general case where $M_n$ is a symmetric (Hermitian) Wigner matrix with i.i.d. entries (not necessarily Gaussian) was studied by Pizzo, Renfrew, and Soshnikov in \cite{PRS} and by Pastur and Lytova in \cite{LP2}.  

The results of \cite{PRS} and \cite{LP2} are extended in \cite{ORS} to the case where $M_n$ is a Wigner matrix with independent but not necessarily identically distributed entries.  The authors require that the off-diagonal entries have uniformly bounded fourth moments, diagonal entries have uniformly bounded second moments, and certain Lindeberg type conditions for both the off-diagonal entries and the diagonal entries are satisfied.  The test function $f(x)$ is assumed to satisfy
\begin{equation*}
	\int_\R (1 + 2|k|)^{2\*s} |\hat{f}(k)|^2 dk < \infty,
\end{equation*}
for some $s>3$ where $\hat{f}$ is the Fourier transform
\begin{equation*}
	\hat{f}(k) = \frac{1}{\sqrt{2 \pi}} \int_\R e^{-ikx} f(x) dx.
\end{equation*}
Under these conditions it is shown in \cite{ORS} that
\begin{equation} \label{eq:intro_idea}
 \sqrt{n} \left( f \left(\frac{M_n}{\sqrt{n}} \right)_{ij} - \E f \left(\frac{M_n}{\sqrt{n}} \right)_{ij} \right) - \alpha(f) (M_n)_{ij} \longrightarrow N(0, d^2(f)),
\end{equation}
as $n \rightarrow \infty$, where $\alpha(f)$ and $d^2(f)$ are given constants depending on the function $f$.  

Recently, similar results were obtained in \cite{ORS2} for the sample covariance case where $M_n = A_n A_N^\ast $ and $A_n$ is an $n \times N$ random matrix with independent entries under analogous moment and Lindeberg type conditions on the entries of $A_n$.

Instead of considering a specific entry of $f(M_n)$, Bai and Pan in \cite{BP} study the fluctuations of $u^\ast f(M_n) u$ when $M_n$ is a Wigner matrix whose entries satisfy some moment assumptions, $f$ is an analytic function, and $u$ is a delocalized unit vector.  

In this paper, we consider non-Hermitian random matrices with independent entries.  Given a $n \times n$ random matrix $M_n$, we define the empirical spectral distribution (ESD), $F^{M_n}$, of $M_n$ to be
$$ F^{M_n}(x,y) := \frac{1}{n} \#\{1 \leq j \leq n : \Re(\lambda_j) \leq x, \Im(\lambda_j) \leq y \}, $$
where $\lambda_1, \ldots, \lambda_n$ are the eigenvalues of $M_n$.  

The ESD for non-Hermitian random matrices with i.i.d. Gaussian entries was first studied by Mehta \cite{M}.  In particular, in the case where the entries of $M_n$ are i.i.d. standard complex normal random variables, the ESD of $\frac{1}{\sqrt{n}}M_n$ converges, as $n \rightarrow \infty$, to the circular law $F_{c}$ given by
$$F_{c}(x,y):= \frac{1}{\pi} \operatorname{mes}\Big(|z|\le 1: \Re(z)\le x, \Im(z)\le y\Big).$$
In other words, $F_c$ is the uniform distribution over the unit disk in the complex plane.  

Mehta used the joint density function of the eigenvalues of $\frac{1}{\sqrt{n}}M_n$ which was derived by Ginibre \cite{Gi}.  The real Gaussian case was studied by Edelman in \cite{Ecir}.  For the general (non-Gaussian) case when there is no formula, the problem appears much more difficult. Important results were obtained by Girko \cite{G1,G2}, Bai \cite{Bcir,BS}, and more recently by G\"otze and Tikhomirov \cite{GT}, Pan and Zhou \cite{PZ}, and Tao and Vu \cite{TVcir}. These results confirm the same limiting law under some moment or smoothness assumptions on the distribution of the entries. Recently, Tao and Vu (appendix by Krishnapur) were able to remove all these additional assumptions, establishing the law under the first two moments \cite{TVuniv}.

\begin{theorem}[Circular law for non-Hermitian i.i.d. matrices; \cite{TVuniv}] \label{thm:cir}
Assume that the entries of the $n \times n$ matrix $M_n$ are i.i.d. copies of a complex random variable of mean zero and variance one. Then the ESD of the matrix $\frac{1}{\sqrt{n}}M_n$ converges almost surely to $F_{c}$ as $n \rightarrow \infty$.
\end{theorem}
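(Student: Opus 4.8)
The plan is to follow Girko's Hermitization strategy combined with the logarithmic potential method. Write $\mu_n := F^{M_n/\sqrt n}$ for the ESD of $\frac{1}{\sqrt n}M_n$ and recall that its logarithmic potential can be rewritten in terms of singular values:
\[
U_{\mu_n}(z) := -\int_{\C}\log|w-z|\,d\mu_n(w) = -\frac{1}{n}\log\bigl|\det\bigl(\tfrac{1}{\sqrt n}M_n-zI\bigr)\bigr| = -\int_0^\infty \log s\,d\nu_{n,z}(s),
\]
where $\nu_{n,z}$ is the empirical distribution of the singular values of $\frac{1}{\sqrt n}M_n-zI$, equivalently (after symmetrization) the ESD of the Hermitization
\[
H_n(z)=\begin{pmatrix} 0 & \frac{1}{\sqrt n}M_n-zI \\[2pt] \bigl(\frac{1}{\sqrt n}M_n-zI\bigr)^{\ast} & 0\end{pmatrix}.
\]
Since $\int_{\C}|w|^2\,d\mu_n(w)\le \frac{1}{n^2}\sum_{i,j}|(M_n)_{ij}|^2\to 1$ almost surely by the law of large numbers, the family $\{\mu_n\}$ is almost surely tight; hence, by the unicity (lower-envelope) theorem for logarithmic potentials, it suffices to show that for Lebesgue-almost every fixed $z\in\C$ one has $U_{\mu_n}(z)\to U_{\mu_c}(z)$ almost surely, where $\mu_c=F_c$ is the circular law (whose potential is the explicit radial function $U_{\mu_c}(z)=-\log|z|$ for $|z|\ge1$). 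By the display above, this reduces to two claims about the shifted matrix: (a) for a.e.\ $z$, $\nu_{n,z}$ converges weakly almost surely to a deterministic measure $\nu_z$ with $\int_0^\infty\log s\,d\nu_z(s)=-U_{\mu_c}(z)$; and (b) for a.e.\ $z$, almost surely the function $s\mapsto\log s$ is uniformly integrable with respect to $\nu_{n,z}$, so the integral in the display converges to its limit.

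For (a): the matrix $\bigl(\frac{1}{\sqrt n}M_n-zI\bigr)\bigl(\frac{1}{\sqrt n}M_n-zI\bigr)^{\ast}$ is a deformed (``information-plus-noise'') sample covariance matrix. By eigenvalue interlacing, altering one row or column of $M_n$ perturbs $\nu_{n,z}$ by $O(1/n)$ in Kolmogorov distance, so a martingale/McDiarmid bound yields almost-sure concentration of the Stieltjes transform of $\nu_{n,z}$ about a deterministic function; that function is characterized by a quadratic self-consistent equation whose coefficients involve only the variance of the entries, so its solution --- the Stieltjes transform of $\nu_z$ --- coincides with the one arising in the complex Ginibre case. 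Invoking Edelman's explicit formula for the Ginibre singular-value density then identifies $\nu_z$ and verifies the identity $\int_0^\infty\log s\,d\nu_z(s)=-U_{\mu_c}(z)$; this identity is exactly what forces the limit of $U_{\mu_n}$ to equal the potential of the circular law.

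The heart of the matter is (b), and within it the small singular values. The large-$s$ part is routine: after a mild entrywise truncation, $\|\frac{1}{\sqrt n}M_n\|=O(1)$ with overwhelming probability, so $\int_{s>K}\log s\,d\nu_{n,z}(s)$ is uniformly small. The difficult part will be the lower tail, which needs two inputs: (i) a polynomial lower bound on the least singular value, $\sigma_{\min}\bigl(\frac{1}{\sqrt n}M_n-zI\bigr)\ge n^{-B}$ with probability $1-o(1)$, so that $\log s\ge -B\log n$ on $\mathrm{supp}\,\nu_{n,z}$; and (ii) a bound $\nu_{n,z}\bigl((0,n^{-\delta}]\bigr)=o(1/\log n)$ on the proportion of tiny singular values. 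Together (i) and (ii) give $\int_0^{n^{-\delta}}|\log s|\,d\nu_{n,z}(s)=o(1)$, which with the weak convergence of $\nu_{n,z}$ and the large-$s$ bound yields (b). Input (i) is the genuinely hard least-singular-value problem for an i.i.d.\ matrix perturbed by the fixed matrix $z\sqrt n\,I$: I would establish it through the compressible/incompressible vector dichotomy of Rudelson--Vershynin together with inverse Littlewood--Offord (small-ball) estimates for the concentration function of random linear combinations of the entries, which use only the non-degeneracy implied by unit variance. Input (ii) is a companion estimate on the counting function of the intermediate singular values $\sigma_{n-k}\bigl(\frac{1}{\sqrt n}M_n-zI\bigr)$, obtained by similar $\varepsilon$-net and small-ball arguments. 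The mere two-moment hypothesis is what makes this step delicate: one cannot simply truncate the entries and compare ESDs via a Hoffman--Wielandt bound (that would require a $(2+\eta)$-th moment), so instead one transfers to a well-behaved model --- Gaussian entries with matching mean and variance --- by a replacement principle asserting that $\frac{1}{n}\bigl(\log|\det(\tfrac{1}{\sqrt n}M_n-z)|-\log|\det(\tfrac{1}{\sqrt n}\widetilde M_n-z)|\bigr)\to 0$ in probability for a.e.\ $z$; verifying its hypotheses is exactly where the least- and intermediate-singular-value estimates (i)--(ii) (for both $M_n$ and $\widetilde M_n$) are used, and the quantitative nature of those estimates is what upgrades convergence in probability to almost sure convergence.

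Putting it together: (a) and (b) give $U_{\mu_n}(z)\to U_{\mu_c}(z)$ almost surely for Lebesgue-almost every $z$, hence, by Fubini, almost surely for a.e.\ $z$ simultaneously. Since $\frac{1}{2\pi}\Delta\log|\cdot|=\delta_0$, testing against $\Delta\varphi$ for $\varphi$ ranging over a countable family of smooth compactly supported functions and using the tightness of $\{\mu_n\}$ together with the uniform integrability from (b) to pass to the limit, we conclude that $\mu_n$ converges weakly to $F_c$ almost surely, which is the assertion of Theorem~\ref{thm:cir}. The main obstacle throughout is the least-singular-value lower bound (i).
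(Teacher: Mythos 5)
Note first that the paper does not prove Theorem \ref{thm:cir} at all: it is quoted verbatim from Tao and Vu \cite{TVuniv} as background for the ensemble being studied, so there is no internal argument to compare yours against. Judged on its own, your outline is the correct strategy and is in fact the one used in the cited source: Girko's Hermitization, reduction of the logarithmic potential $-\frac{1}{n}\log|\det(\frac{1}{\sqrt n}M_n-zI)|$ to the singular-value distribution $\nu_{n,z}$, convergence of $\nu_{n,z}$ to the deterministic limit, uniform integrability of $\log s$ controlled by a polynomial lower bound on the least singular value plus a count of intermediate singular values, and a replacement principle comparing with the Gaussian model to cope with the bare second-moment hypothesis (you are right that truncation plus a Hoffman--Wielandt comparison is unavailable here). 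The concluding passage from convergence of potentials for a.e.\ $z$ to weak convergence of the ESDs, via $\frac{1}{2\pi}\Delta\log|\cdot|=\delta_0$ and a countable family of test functions, is also sound.

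The genuine gap is that the three load-bearing ingredients are named rather than proved, and each is a major theorem in its own right: (i) the bound $\sigma_{\min}(\frac{1}{\sqrt n}M_n-zI)\ge n^{-B}$ with high probability assuming only mean zero and unit variance (no bounded density, no higher moments) requires the full inverse Littlewood--Offord machinery and is the technical heart of \cite{TVuniv}; (ii) the intermediate singular-value estimate needs a quantitative statement of the form $\sigma_{n-k}\ge c\,k/n$ over a range of $k$, summed dyadically --- your stated bound $\nu_{n,z}((0,n^{-\delta}])=o(1/\log n)$ is what you need but no argument is given for it; and (iii) the replacement principle itself, together with the upgrade from convergence in probability to almost sure convergence, is asserted via ``quantitative estimates plus Borel--Cantelli'' without verification. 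A referee would also note a cosmetic point: the limit measure $\nu_z$ is usually identified through its self-consistent (Marchenko--Pastur-type) equation and the explicit evaluation $\int\log s\,d\nu_z(s)=-U_{F_c}(z)$, rather than through Edelman's Ginibre formula, though this does not affect correctness. In short: right road map, matching the proof of \cite{TVuniv} that the paper simply cites, but as a standalone proof it defers precisely the steps that make the theorem hard.
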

Theorem \ref{thm:cir} also holds for random matrices with independent but not necessarily identically distributed entries, provided one has some uniform control of the entries (see Appendix C of \cite{TVuniv}).  

When $M_n$ is a non-Hermitian matrix and $f$ is analytic, we define $f(M_n)$ as
\begin{equation*}
	f(M_n) = \sum_{k=0}^\infty f^{(k)}(0) \frac{M_n^k}{k!}
\end{equation*}
provided 
\begin{equation*}
	\sum_{k=0}^\infty \left| f^{(k)}(0) \right| \frac{\|M_n^k\|}{k!} < \infty,
\end{equation*}
where $\| \cdot \|$ denotes the spectral norm.  

In \cite{RS}, Rider and Silverstein study linear eigenvalue statistics of non-Hermitian random matrices with i.i.d. entries.  Let $f$ be analytic in a neighborhood of the disk $\{z \in \C : |z| \leq 4 \}$.  It is shown in \cite{RS} that 
\begin{equation} \label{eq:trfm}
	\tr\left[ f \left( \frac{1}{\sqrt{n}} M_n \right)\right] - n f(0)
\end{equation}
converges to a mean zero complex Gaussian as the size of the matrix tends to infinity, provided the distribution of the complex entries of $M_n$ satisfy some moment and smoothness assumptions.  The real case was studied in \cite{NP} under a different set of assumptions on the entries of $M_n$ in the case when $f$ is a polynomial.  

The goal of this paper is to prove the analogue of \eqref{eq:intro_idea} in the case that $M_n$ is a non-Hermitian random matrix with independent entries.  In particular, we consider the following ensemble of non-Hermitian random matrices.  

\begin{definition} \label{def:c0}
For each $n \geq 1$, let $M_n = (m_{nij})$ be a $n \times n$ real random matrix.  The sequence of random matrices $\{M_n\}_{n \geq 1}$ is said to satisfy condition {\bf C0} if the following conditions hold:
\begin{enumerate}[(i)]
\item For each $n \geq 1$, $\{ m_{nij} : 1 \leq i, j \leq n \}$ is a collection of independent real random variables with mean zero and variance one,  
\item $\sup_{n,i,j} \E |m_{nij}|^4 < \infty$,
\item For all $\epsilon>0$, 
$$\frac{1}{n^2} \sum_{i,j=1}^n \E |m_{nij}|^4 \indicator{|m_{nij}| > \epsilon\sqrt{n}} \longrightarrow 0$$
as $n \rightarrow \infty$. \label{lindeberg_condition}
\end{enumerate}
\end{definition}

\begin{remark}
For each $n \geq 1$, let $M_n$ be an $n \times n$ matrix whose entries are i.i.d. copies of a real random variable $\xi$.  Then the sequence $\{M_n\}_{n \geq 1}$ satisifies condition {\bf C0} if $\xi$ has mean zero, unit variance, and $\E[\xi^4] = m_4 < \infty$.  
\end{remark}

For each $n \geq 1$, let $M_n = (m_{nij})$ be a $n \times n$ real random matrix and assume the sequence of random matrices $\{M_n\}_{n \geq 1}$ satisfies condition {\bf C0}.  Let $f$ be analytic in a region that contains the disk $\{ z \in \C : |z| \leq 2+\epsilon \}$ for some $\epsilon>0$.  In our main result below, we show that for any fixed indices $i$ and $j$, the entry
\begin{equation} \label{eq:fmi}
	f \left( \frac{1}{\sqrt{n}}M_n \right)_{ij} - f(0) \delta_{ij} - f'(0) \frac{1}{\sqrt{n}} m_{nij} 
\end{equation}
converges in distribution to a complex Gaussian random variable as $n$ tends to infinity, where $\delta_{ij}$ denotes the Kronecker delta.  The appearance of $f(0)$ is not surprising considering \eqref{eq:trfm} and since $f(0)$ is the average of $f$ against the circular law.  That is, 
$$ \frac{1}{\pi} \int_{\{z:|z| \leq 1\}}f(z) d^2 z = f(0) $$
because $f$ is analytic.  The $f'(0)$ term in \eqref{eq:fmi} is analogous to the $\alpha(f)$ term which appears in \eqref{eq:intro_idea}.  

\begin{theorem} \label{thm:main}
For each $n \geq 1$, let $M_n$ be an $n \times n$ real random matrix and assume the sequence $\{M_n\}_{n \geq 1}$ satisfies condition {\bf C0}.  Fix $l \geq 1$ and assume that for each $1 \leq i \leq l$ and every $\delta > 0$, 
\begin{equation} \label{eq:lf14}
	\frac{1}{n} \sum_{j=1}^n \left\{ \E\left[ m_{nij}^4 \indicator{|m_{nij}|> \delta n^{1/4}} \right] + \E\left[ m_{nji}^4 \indicator{|m_{nji}| > \delta n^{1/4}} \right] \right\} \longrightarrow 0
\end{equation}
as $n \rightarrow \infty$.  Then the following holds:
\begin{enumerate}[(i)]
\item if $f$ is analytic in a region that contains the disk $\{z \in \C : |z| \leq 2 + \epsilon \}$ for some $\epsilon > 0$, the normalized matrix entries 
$$ \left\{ \sqrt{n} \left( f \left( \frac{1}{\sqrt{n}} M_n \right)_{ij} - f(0) \delta_{ij} - f'(0) \frac{1}{\sqrt{n}} m_{nij} \right) : 1 \leq i,j \leq l \right\} $$
are independent in the limit $n \rightarrow \infty$,
\item if $f_1, \ldots, f_m$ are analytic in a region that contains the disk $\{z \in \C : |z| \leq 2 + \epsilon \}$ for some $\epsilon > 0$ and $1 \leq i,j \leq l$ are fixed, the random $m$-vector 
$$ \sqrt{n} \left(  f_k \left( \frac{1}{\sqrt{n}} M_n \right)_{ij} - f_k(0) \delta_{ij} - f_k'(0) \frac{1}{\sqrt{n}} m_{nij} \right)_{k=1}^m $$
converges in distribution to the mean zero multivariate complex Gaussian $(Z(f_1), \ldots, Z(f_m))$ as $n \rightarrow \infty$ defined by
\begin{equation} \label{eq:ezfp}
	\E[Z(f_p) \overline{Z(f_q)}] = \sum_{r=2}^\infty \frac{f_p^{(r)}(0)}{r!} \overline{\frac{f_q^{(r)}(0)}{r!}}.
\end{equation}
\end{enumerate}
\end{theorem}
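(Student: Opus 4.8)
The plan is to expand $f$ into a power series, reduce the assertion to a finite-dimensional central limit theorem for the low-degree contributions together with a uniform estimate that controls the tail, and then glue. Write $f(z)=\sum_{k\ge 0}c_kz^k$ with $c_k=f^{(k)}(0)/k!$; since $f$ is analytic on $\{|z|\le 2+\epsilon\}$, Cauchy's estimates give $|c_k|\le C\rho^{-k}$ for every fixed $\rho<2+\epsilon$. Putting $Y^{(ij)}_k:=n^{(1-k)/2}(M_n^k)_{ij}$, the random variable in the statement equals $\sqrt n\bigl(f(M_n/\sqrt n)_{ij}-f(0)\delta_{ij}-f'(0)n^{-1/2}m_{nij}\bigr)=\sum_{k\ge 2}c_k\,Y^{(ij)}_k$, since the $k=0$ and $k=1$ terms of the series are exactly $f(0)\delta_{ij}$ and $f'(0)n^{-1/2}m_{nij}$. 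It therefore suffices to prove: (a) for each fixed $K$ the vector $\{Y^{(ij)}_k:2\le k\le K,\ 1\le i,j\le l\}$ converges in distribution to a jointly real Gaussian family $\{X^{(ij)}_k\}$ with $\E[X^{(ij)}_kX^{(i'j')}_{k'}]=\delta_{kk'}\delta_{ii'}\delta_{jj'}$; (b) there is an absolute constant $C_0$ close to $4$ (in particular $C_0<(2+\epsilon)^2$) with $\E|Y^{(ij)}_k|^2\le C\,C_0^{\,k}$ uniformly in $n,i,j$, so that $\sup_n\E\bigl|\sum_{k>K}c_kY^{(ij)}_k\bigr|^2\to 0$ as $K\to\infty$; and (c) $\E\bigl[\sum_{k\ge 2}c_kY^{(ij)}_k\bigr]\to 0$. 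Granting (a)--(c), a routine approximation argument (letting $K\to\infty$) gives $\sum_{k\ge 2}c_kY^{(ij)}_k\Rightarrow Z(f):=\sum_{k\ge 2}c_kX^{(ij)}_k$, and then $\E[Z(f_p)\overline{Z(f_q)}]=\sum_{k,k'\ge 2}c^{(p)}_k\overline{c^{(q)}_{k'}}\,\E[X^{(ij)}_kX^{(ij)}_{k'}]=\sum_{r\ge 2}\tfrac{f_p^{(r)}(0)}{r!}\,\overline{\tfrac{f_q^{(r)}(0)}{r!}}$, while independence across $(i,j)$ in part (i) follows from the factor $\delta_{ii'}\delta_{jj'}$ in the covariance together with joint Gaussianity.

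All of (a)--(c) rest on moment computations carried out by the path-counting (moment) method. Writing $(M_n^k)_{ij}=\sum_w m_w$, where $w=(i,i_1,\dots,i_{k-1},j)$ runs over walks of length $k$ and $m_w$ is the product of the corresponding matrix entries, the expectation of a product of such sums is a sum over tuples of walks, and a term survives only when every \emph{directed} edge occurs with even total multiplicity. This is where the non-Hermitian hypothesis is used in an essential way: since $m_{nij}$ and $m_{nji}$ are independent rather than equal, backtracking along an edge does not pair it with itself, so the walks that contribute at leading order are quite different from the Wigner case (and, consistently, the relevant constant turns out to be governed by $\|M_n/\sqrt n\|\to 2$ rather than by the spectral radius $1$). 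Concretely, the dominant configuration for $\E|Y^{(ij)}_k|^2$ is $w=w'$ with $w$ a self-avoiding path from $i$ to $j$ when $i\ne j$, or a simple directed cycle through $i$ when $i=j$; in either case there are $n^{k-1}(1+o(1))$ of them, each contributing $1$, so $\E|Y^{(ij)}_k|^2\to 1$. A mismatch in the degrees ($k\ne k'$) or in the endpoints ($(i,j)\ne(i',j')$) forces coincidences among the free vertices and strictly lowers the power of $n$, so the corresponding covariance tends to $0$; the same bookkeeping, with condition \textbf{C0}(ii) handling edges of multiplicity four (multiplicities $\ge 6$ arise only in lower-order configurations and are disposed of after truncation), yields the uniform bound in (b).

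For the distributional statement (a) I would first truncate the entries, replacing $m_{nij}$ by $\hat m_{nij}=m_{nij}\indicator{|m_{nij}|\le\beta_{nij}}$ (recentered to have mean zero), with threshold $\beta_{nij}=n^{1/4}$ when $i$ or $j$ belongs to $\{1,\dots,l\}$ and $\beta_{nij}$ of order $o(\sqrt n)$ otherwise; conditions \eqref{eq:lf14} and \textbf{C0}(iii) respectively guarantee that this replacement perturbs each $Y^{(ij)}_k$ by $o(1)$ in $L^2$. After truncation all moments are finite and suitably bounded, and one computes $\E\bigl[\prod_s Y^{(i_sj_s)}_{k_s}\bigr]$ by the same walk expansion: the leading term comes from partitions of the factors into matched pairs, each pair ``linked'' exactly as in the variance computation above, which reproduces the Wick/Isserlis formula for the Gaussian family $\{X^{(ij)}_k\}$, while configurations with an odd number of factors, a triple or higher linkage, or an unmatched walk contribute $o(1)$. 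Statement (c) follows from a simpler instance of the same counting: $n^{(1-k)/2}\E[(M_n^k)_{ij}]\to 0$ for each $k\ge 2$, with a summable uniform bound, so dominated convergence applies to the series.

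The main obstacle is the combinatorial analysis of the walk sums, in two respects. First, the second- and higher-moment estimates must be done in the \emph{directed} setting, where the Hermitian ``backtrack'' contributions are absent; one has to check that at leading order only the paired self-avoiding (or simple-cycle) configurations survive and, more delicately, that the moment bound in (b) holds with a constant strictly below $(2+\epsilon)^2$, since this is precisely what makes the power-series tail negligible uniformly in $n$. Second, the entries lying in the distinguished rows and columns $1,\dots,l$ are traversed far fewer times than bulk entries, so their individual fluctuations are not automatically negligible; controlling them --- for instance the $k=2$ contribution $\sum_a m_{nia}m_{nai}$ to $(M_n^2)_{ii}$ --- requires a Lindeberg-type central limit theorem at scale $n^{1/4}$, which is exactly the role of hypothesis \eqref{eq:lf14} (via Cauchy--Schwarz, $|m_{nia}m_{nai}|>\delta\sqrt n$ forces $|m_{nia}|>\sqrt\delta\,n^{1/4}$ or $|m_{nai}|>\sqrt\delta\,n^{1/4}$, and \eqref{eq:lf14} makes the resulting truncated Lindeberg sum vanish).
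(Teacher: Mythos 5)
Your route is genuinely different from the paper's (which proves a functional CLT for the resolvent entries $Y_n(z)$ on a compact set outside the disk of radius $2+\epsilon$ --- Theorem \ref{thm:resolvent}, via the block/Schur decomposition, a CLT for bilinear forms, and tightness --- and then transfers to general analytic $f$ by Cauchy's integral formula), and the limit you identify is the right one: asymptotically independent standard real Gaussians $X_k^{(ij)}$ for the normalized powers do reproduce \eqref{eq:ezfp}. But your step (b) is a genuine gap, and it is exactly the point the paper's resolvent route is designed to avoid. First, as literally stated (before truncation) the bound $\E|Y_k^{(ij)}|^2\le C\,C_0^k$ can fail outright under \textbf{C0}: condition \textbf{C0} only gives fourth moments, so for $k\ge 3$ the expansion of $\E|(M_n^k)_{ii}|^2$ contains terms such as $\E[m_{nii}^{2k}]$ (the walk that loops $k$ times at $i$), which may be infinite. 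Second, even after truncation, what you need is a bound uniform in $n$ \emph{and} in $k$ with a constant $C_0$ strictly below $(2+\epsilon)^2$, i.e.\ essentially $\E|(M_n^k)_{ij}|^2\le C\,n^{k-1}(4+o(1))^k$ for all $k$ and $n$ simultaneously; for $k$ growing with $n$ this is not a routine ``lower-order configurations'' remark but is, in substance, the Geman/Bai--Yin-type combinatorial estimate that underlies the spectral-norm convergence $\limsup\|M_n/\sqrt n\|\le 2$ (the paper imports this as Lemma \ref{lemma:norm}, citing \cite[Theorem 5.9]{BS}, and then never needs expectations of high powers because it works with $R_n(z)$ on the event $\Omega_n$ where $\sup_{z\in D}\|R_n(z)\|\le\mathcal K$). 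Note also that you cannot dodge (b) by simply conditioning on $\Omega_n$: on that event the crude bound is $|Y_k^{(ij)}|\le \sqrt n\,\kappa^k$, and the factor $\sqrt n$ destroys the tail estimate for any fixed cutoff $K$, so one really needs sharp moment bounds for $k$ at least up to order $\log n$. Until (b) is actually proved (after a truncation stated up front, with the multiplicity-$\ge 6$ edges controlled by the truncation levels $\epsilon_n\sqrt n$ and $\epsilon_n n^{1/4}$ in the counting), the reduction of the series to finitely many $k$ is unjustified, and this is the heart of the argument rather than a technical afterthought.

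Two smaller points. The joint Gaussianity in your step (a) is only sketched: you need convergence of \emph{all} joint moments of the family $\{Y_k^{(ij)}\colon 2\le k\le K,\ 1\le i,j\le l\}$ to Wick values, with the distinguished rows and columns $1,\dots,l$ truncated at level $n^{1/4}$ and the hypothesis \eqref{eq:lf14} quantifying that truncation error; this is a sizeable directed-walk bookkeeping exercise (the paper instead gets joint Gaussianity from a single CLT for bilinear forms, Lemma \ref{lemma:clt}, applied to $\tilde\psi\,Q\,\tilde\phi$, plus the variance computation of Lemma \ref{lemma:variance}). Also, after recentering the truncated entries the variances are only $1+o(1)$, and the identity $\sum_{k\ge 2}c_kY_k^{(ij)}=\sqrt n\bigl(f(M_n/\sqrt n)_{ij}-f(0)\delta_{ij}-f'(0)n^{-1/2}m_{nij}\bigr)$ itself only holds on the event that the series defining $f(M_n/\sqrt n)$ converges, i.e.\ again on a norm event of probability $1-o(1)$; both are easily repaired, but should be said.
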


\begin{remark}
We draw the reader's attention to one special case of Theorem \ref{thm:main}.  Consider the monomial $f(z) = z^t$ for some integer $t>1$.  In this case, the limiting distribution $Z(f)$ is standard Gaussian.  Moreover, if $f(z) = z^t$, $g(z) = z^s$ for integers $s,t>1$ with $s \neq t$, then \eqref{eq:ezfp} implies that the limiting distributions $Z(f)$ and $Z(g)$ are independent.  
\end{remark}

Theorem \ref{thm:main} is a corollary of Theorem \ref{thm:resolvent} below in which we study the entries of the resolvent
$$ \left( z I_n - \frac{1}{\sqrt{n}} M_n \right)^{-1} = \left( z - \frac{1}{\sqrt{n}} M_n \right)^{-1}, $$
where $I_n$ is the identity matrix of order $n$.  The result can be extended from the resolvent case to arbitrary analytic functions by using Cauchy's integral formula.  

Below and throughout the paper, we use $\sqrt{-1}$ for the imaginary unit and reserve $i$ for an index.

\begin{theorem} \label{thm:resolvent}
For each $n \geq 1$, let $M_n$ be an $n \times n$ real random matrix and assume the sequence $\{M_n \}_{n \geq 1}$ satisfies condition {\bf C0}. Fix $l \geq 1$ and assume that for each $1 \leq i \leq l$ and all $\delta > 0$ \eqref{eq:lf14} holds as $n \rightarrow \infty$.  Define the $l \times l$ matrix
\begin{equation} \label{def:Y}
Y_n(z) := \sqrt{n} \left[ \left(z - \frac{1}{\sqrt{n}} M_n \right)^{-1}_{ij} - \frac{1}{z}\delta_{ij} - \frac{m_{nij}}{z^2 \sqrt{n}} \right]_{i,j=1}^l.
\end{equation}
Let $D \subset \C$ be any compact set outside the disk $\{z \in \C : |z| \leq 2 + \epsilon \}$ for some $\epsilon >0$.  Define 
$$ S_n(z) := \sum_{i,j=1}^l \left[ \alpha_{ij} \left( \frac{Y_n(z)_{ij} + \overline{Y_n(z)_{ij}}}{2} \right) + \beta_{ij} \left( \frac{Y_n(z)_{ij} - \overline{Y_n(z)_{ij}}}{2 \sqrt{-1}} \right) \right]$$
where $\alpha_{ij}, \beta_{ij} \in \C$ for $1 \leq i,j \leq l$.  Then $\{S_n\}_{n \geq 1}$ is tight in the space of continuous functions on $D$ and converges weakly to the complex Gaussian process
$$ S(z) := \sum_{i,j=1}^l \left[ \alpha_{ij} \left( \frac{Y(z)_{ij} + \overline{Y(z)_{ij}}}{2} \right) + \beta_{ij} \left( \frac{Y(z)_{ij} - \overline{Y(z)_{ij}}}{2 \sqrt{-1}} \right) \right]$$
where $Y(z)= \{Y(z)_{ij}\}_{i,j=1}^l$ is the matrix valued complex Gaussian process defined by
$$ \E[Y(z)_{ij} \overline{Y(w)}_{ij}] = \frac{1}{z^2 \overline{w}^2} \frac{1}{z \overline{w} - 1}, $$
and $\E[Y(z)_{ij}] = 0$.  

In addition, for any finite $s \geq 1$, the entries $Y_n(z_k)_{i_k,j_k}$, $1 \leq i_k, j_k \leq l$, $1 \leq k \leq s$, $z_1, \ldots, z_s \in D$ are independent provided $(i_1, j_1),(i_2, j_2), \ldots, (i_s, j_s)$ are distinct.  
\end{theorem}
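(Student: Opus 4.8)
The plan is to expand the resolvent into a Neumann series and reduce the statement to a central limit theorem for finite linear combinations of the ``walk sums'' $X_{n,k,ij}:=n^{-(k-1)/2}(M_n^k)_{ij}$. Put $R:=\inf_{z\in D}|z|>2$. Under {\bf C0} one has $\|M_n\|\le(2+\tfrac\epsilon2)\sqrt n$ with probability tending to one, so that for $z\in D$ the series
$$\Bigl(z-\tfrac1{\sqrt n}M_n\Bigr)^{-1}_{ij}=\frac{\delta_{ij}}{z}+\frac{m_{nij}}{z^2\sqrt n}+\sum_{k\ge 2}\frac{(M_n^k)_{ij}}{z^{k+1}n^{k/2}}$$
converges uniformly in $z$, and hence on this event $Y_n(z)_{ij}=\sum_{k\ge2}z^{-(k+1)}X_{n,k,ij}$, a holomorphic function of $z$ on a fixed neighborhood of $D$; likewise $m_{nij}=X_{n,1,ij}$.

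First I would truncate: replace every entry $m_{npq}$ with $p\le l$ or $q\le l$ by its truncation at level $\delta n^{1/4}$, and every other entry by its truncation at level $\delta\sqrt n$, then recenter and rescale to restore mean zero and variance one, calling the result $\widehat M_n$. Using the resolvent identity together with the norm bound above, condition {\bf C0}\eqref{lindeberg_condition}, and --- crucially --- the hypothesis \eqref{eq:lf14}, one checks that replacing $M_n$ by $\widehat M_n$ alters $Y_n(z)_{ij}$ by a quantity that tends to $0$ in probability uniformly on $D$. It therefore suffices to prove the theorem for $\widehat M_n$, whose entries now have all moments finite and polynomially bounded in $n$.

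The core is a method-of-moments argument for $\widehat M_n$. For each fixed $K$ I would show that the family $\{X_{n,k,ij}:1\le k\le K,\ 1\le i,j\le l\}$ (with the truncated, recentered entries) converges jointly to a family $\{X_{k,ij}\}$ of independent real standard Gaussians, by expanding $\E\bigl[\prod_{r=1}^{2p}X_{n,k_r,i_r,j_r}\bigr]$ as a sum over $2p$-tuples of walks of prescribed lengths between the prescribed endpoints. A configuration survives in the limit only if the $2p$ walks split into $p$ pairs, each pair being two copies of a single walk carrying a single endpoint label $(i,j)$; every other configuration loses powers of $n$ in the vertex count (interior vertices collapsing, walks revisiting the fixed indices $i$ or $j$, edges of multiplicity exceeding two), and the truncation levels $\delta\sqrt n$ and $\delta n^{1/4}$ are chosen exactly so that the available high-moment bounds on the entries involved cannot recover those lost powers; odd-order moments vanish. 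The surviving terms reproduce the Wick formula for independent standard Gaussians, and the same bookkeeping --- easier, since mismatched endpoints force additional vertex coincidences --- shows that any cross term with two endpoint labels $(i,j)\ne(i',j')$ vanishes in the limit. Hence the finite sums $\sum_{k=2}^{K}z^{-(k+1)}X_{n,k,ij}$ converge, jointly over $1\le i,j\le l$ and over finitely many $z\in D$, to the asserted Gaussians, giving $\E[Y(z)_{ij}\overline{Y(w)_{i'j'}}]=\delta_{ii'}\delta_{jj'}\sum_{k\ge2}(z\overline w)^{-(k+1)}=\delta_{ii'}\delta_{jj'}\,\tfrac1{z^2\overline w^2(z\overline w-1)}$ and, as the case of mismatched labels, the stated independence of $Y_n(z_k)_{i_k,j_k}$ over distinct $(i_k,j_k)$.

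To pass from finite $K$ to the full series and to obtain tightness, an expansion of $\E|X_{n,k,ij}|^2$ over pairs of length-$k$ walks (using the level-$\delta\sqrt n$ truncation for the entry moments) yields the uniform bound $\E|X_{n,k,ij}|^2\le C\,4^{k}$, so $\sup_{z\in D}\E\bigl|\sum_{k>K}z^{-(k+1)}X_{n,k,ij}\bigr|^2\le C\sum_{k>K}(4/R^2)^k\to0$ as $K\to\infty$, uniformly in $n$ (here $R>2$ is used), and the same estimate after differentiating in $z$ controls the tail of the derivative. Thus the $\widehat Y_n(\cdot)_{ij}$ are uniformly $L^2$-bounded on a slightly larger compact neighborhood $D'$ of $D$; being holomorphic there, Cauchy's estimates turn this into uniform control of their derivatives on $D$, hence equicontinuity, so $\{\widehat Y_n(\cdot)_{ij}\}$ is tight in $C(D)$, and therefore so are the conjugate processes and every fixed linear combination, i.e. $\{S_n\}$. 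Combined with the finite-dimensional convergence and the uniform tail estimate this gives $S_n\Rightarrow S$ in $C(D)$, and undoing the truncation finishes the proof. I expect the combinatorial estimate of the third step --- isolating the dominant walk configurations and showing all degenerate ones are negligible after truncation --- to be the main obstacle, and it is there that the precise exponent $n^{1/4}$ in \eqref{eq:lf14} is used.
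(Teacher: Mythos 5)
Your route is genuinely different from the paper's. You expand the resolvent in a Neumann series and aim at a CLT for the walk sums $X_{n,k,ij}=n^{-(k-1)/2}(M_n^k)_{ij}$ by the method of moments; the paper instead inverts the partitioned matrix, so that the $l\times l$ corner of the resolvent equals $\bigl(z-\tfrac{1}{\sqrt n}X-\tfrac1n\psi R_{n,l}(z)\phi\bigr)^{-1}$ and hence $Y_n(z)\approx\frac{\sqrt n}{z^2}\cdot\frac1n\psi R_{n,l}(z)\phi$, a bilinear form in the first $l$ rows and columns with an independent kernel. Gaussianity then comes for free from a ready-made CLT for bilinear forms (Lemma \ref{lemma:clt}), and the only real computation is the leave-one-out argument showing $\frac1n\tr\bigl(R_{n,l}(z)R_{n,l}(w)^\T\bigr)\to\frac1{zw-1}$ (Lemma \ref{lemma:variance}); tightness is then a Kolmogorov-type increment bound. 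Your covariance bookkeeping $\sum_{k\ge2}(z\overline w)^{-(k+1)}=\frac1{z^2\overline w^2(z\overline w-1)}$ is correct and matches the theorem, and the truncation step is essentially Lemma \ref{lemma:truncation}, so the plan is viable in principle.

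However, as written there are two genuine gaps. First, the entire difficulty of the theorem is concentrated in the step you only sketch: the joint convergence of $\{X_{n,k,ij}\}$ to independent standard Gaussians requires computing all joint moments, establishing the Wick factorization, and showing every degenerate walk configuration (collapsed interior vertices, edges of multiplicity exceeding two, walks re-entering the rows and columns indexed by $[l]$) is negligible using only bounded fourth moments together with the two truncation scales $\delta\sqrt n$ and $\delta n^{1/4}$; this is a substantial directed-walk combinatorics argument, not a routine verification, and nothing in the proposal carries it out. Second, the interchange of the $k$-sum with $n\to\infty$ rests on the asserted bound $\E|X_{n,k,ij}|^2\le C\,4^k$ uniformly in $n$ \emph{and} $k$, which is not justified and is delicate exactly where you need it: the deterministic tail bound on the event $\{\|M_n/\sqrt n\|\le 2+\epsilon/2\}$ only beats the $\sqrt n$ prefactor once $k\gtrsim\log n$, so the moment bound must cover $k$ growing with $n$, and a bound uniform in that range is essentially a F\"uredi--Koml\'os/Geman-type estimate, i.e.\ a moment proof of the norm bound itself. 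The fix is standard --- use walk-moment estimates only for $K<k\le(\log n)^2$ and the norm event (Proposition \ref{prop:contra}) for larger $k$ --- but it must be stated and proved; the blanket $C\,4^k$ claim cannot simply be asserted. With those two items supplied (and the minor point that truncation-plus-recentering changes the resolvent and needs a comparison argument, as in Lemma \ref{lemma:truncation}), your approach would give an alternative, more self-contained but considerably longer proof.
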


The fact that Theorem \ref{thm:resolvent} is stated for $D$ outside the disk $\{z \in \C : |z| \leq 2 + \epsilon \}$ (and Theorem \ref{thm:main} is stated for functions analytic in a region containing $\{z \in \C : |z| \leq 2 + \epsilon \}$) requires some explanation.  For simplicity, assume for each $n \geq 1$, $M_n$ is a $n \times n$ real random matrix whose entries are i.i.d. copies of a random variable with mean zero, unit variance, and finite fourth moment.  Since the spectral radius of $\frac{1}{\sqrt{n}} M_n$ converges to $1$ in probability as $n$ tends to infinity \cite[Theorem 5.18]{BS}, it is natural to conjecture that Theorem \ref{thm:resolvent} should hold for $D$ outside the disk $\{z \in \C : |z| \leq 1 + \epsilon \}$ (and Theorem \ref{thm:main} should hold for $f$ analytic in a region containing $\{z \in \C : |z| \leq 1 + \epsilon \}$).  

In order to prove Theorem \ref{thm:resolvent}, we need to ensure that $zI_n - \frac{1}{\sqrt{n}} M_n$ is invertible and obtain control of the resolvent.  Since the spectral radius converges to $1$, it is natural to work on the event that the spectral radius is less than $1+\epsilon/2$.  However, in the proof of Theorem \ref{thm:resolvent}, we must also consider the matrix $M_{n,l}$, which is constructed from the matrix $M_n$ by removing the first $l$ columns and $l$ rows.  The problem that arises is that control of the spectral radius of $M_n$ does not imply that we have similar control of the spectral radius of $M_{n,l}$.  On the other hand, $\|M_{n,l}\| \leq \|M_n\|$.  Thus we must deal with the spectral norm instead of the spectral radius.  Since $\frac{1}{\sqrt{n}} \|M_n\|$ converges in probability to $2$ as $n \rightarrow \infty$ (see Lemma \ref{lemma:norm} below), we have uniform control of the resolvent if we take $|z| > 2$.  This statement is made precise in Proposition \ref{prop:contra}.  

The paper is organized as follows.  In Section \ref{sec:overview}, we state our preliminary tools and prove Theorem \ref{thm:main}.  The proof of Theorem \ref{thm:resolvent} is divided into two parts and presented in Sections \ref{sec:finite} and \ref{sec:tight}.

\section{Overview and Preliminary Tools} \label{sec:overview}

In this section, we introduce some standard notation and present our preliminary tools.  

Throughout this paper, $n$ is an asymptotic parameter going to infinity.  We use $o(1)$ to denote any quantity that is bounded in magnitude by an expression that converges to zero as $n$ tends to infinity.  We use $X=O(Y)$ to denote the estimate $|X| \leq CY$ where the implied constant $C$ is independent of $n$.  We use $C$ throughout the paper to denote some positive constant which does not depend on $n$ that may change from line to line.  

We note that by condition (\ref{lindeberg_condition}) from Definition \ref{def:c0}, it follows that there exists a sequence $\epsilon_n \rightarrow 0$ such that
\begin{equation*} 
	\frac{1}{\epsilon_n^4 n^2} \sum_{i,j} \E |m_{nij}|^4 \indicator{ |m_{nij}| > \epsilon_n \sqrt{n}} \longrightarrow 0
\end{equation*}
as $n \rightarrow \infty$.  

We then have the following standard truncation lemma which can be found in \cite{ORS2}.

\begin{lemma} \label{lemma:truncation}
Assume that for each $n\geq 1$, $M_n$ is an $n \times n$ real random matrix and the sequence $\{M_n\}_{n \geq 1}$ satisifies condition {\bf C0}.  Fix $l \geq 1$ and write $[l] = \{1, 2, \ldots, l\}$.  Assume that for each $i \in [l]$ and for every $\delta>0$, \eqref{eq:lf14} holds as $n \rightarrow \infty$.  Then there exists a sequence $\{\tilde{M}_n\}_{n \geq 1}$ where $\tilde{M}_n$ is a $n \times n$ matrix with independent entries and there exists a sequence $\epsilon_n$ which tends to zero as $n$ tends to infinity such that
\begin{enumerate}[(i)]
	\item the entries $(\tilde{M}_n)_{jk}$ have mean zero and variance one,
	\item $\sup_{j,k \notin [l]} |(\tilde{M}_n)_{jk}| \leq \epsilon_n \sqrt{n}$,
	\item $\sup_{1 \leq j \leq n, i \in [l]} |(\tilde{M}_n)_{ij}| \leq \epsilon_n n^{1/4}$,
	\item $\sup_{1 \leq j \leq n, i \in [l]} |(\tilde{M}_n)_{ji}| \leq \epsilon_n n^{1/4}$,
	\item $\sup_{n,j,k} \E |(\tilde{M}_n)_{jk}|^4 < \infty$,
	\item $\Prob(M_n \neq \tilde{M}_n) \longrightarrow 0$ as $n \rightarrow \infty$.
\end{enumerate}
\end{lemma}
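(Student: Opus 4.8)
The plan is the standard two-stage truncation. First I would replace $M_n$ by a matrix $\hat M_n$ whose entries are hard-bounded at the two prescribed scales and which coincides with $M_n$ off an event of vanishing probability; then I would recenter and rescale the entries of $\hat M_n$ to restore mean zero and variance one exactly, the correction being uniformly negligible. Before either step I would upgrade the two Lindeberg-type hypotheses to a single vanishing sequence $\epsilon_n$.

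The remark displayed just before the lemma already furnishes $\epsilon_n^{(1)}\to 0$ with $\frac{1}{(\epsilon_n^{(1)})^4 n^2}\sum_{i,j}\E|m_{nij}|^4\indicator{|m_{nij}|>\epsilon_n^{(1)}\sqrt n}\to 0$; since \eqref{eq:lf14} holds for every fixed $\delta>0$ and only the finitely many rows and columns $i\in[l]$ are involved, a routine diagonalization gives $\delta_n\to 0$ with $\frac{1}{\delta_n^4 n}\sum_{j=1}^n\{\E[m_{nij}^4\indicator{|m_{nij}|>\delta_n n^{1/4}}]+\E[m_{nji}^4\indicator{|m_{nji}|>\delta_n n^{1/4}}]\}\to 0$ for each $i\in[l]$, and I would arrange $\delta_n$ to decay slowly enough (constant on long blocks of $n$'s) that also $\delta_n n^{1/4}\to\infty$. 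Set $\epsilon_n:=\max(\epsilon_n^{(1)},\delta_n)$; enlarging the sequence only helps both displayed limits. Now put $\tau_{jk}:=\epsilon_n n^{1/4}$ when $j\in[l]$ or $k\in[l]$ and $\tau_{jk}:=\epsilon_n\sqrt n$ otherwise, and let $\hat M_n$ have entries $m_{njk}\indicator{|m_{njk}|\le\tau_{jk}}$. These are independent, obey the bounds (ii)--(iv), and satisfy $\E|(\hat M_n)_{jk}|^4\le\E|m_{njk}|^4\le C$. Markov's inequality at the fourth moment together with a union bound over the at most $n^2$ bulk entries and the at most $2ln$ entries in the first $l$ rows or columns, combined with the two displayed limits above, give $\Prob(M_n\neq\hat M_n)\to 0$, which is item (vi), the coupling being the obvious one in which $\hat M_n$ is built directly from $M_n$.

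For the remaining items I would recenter and rescale: with $c_{jk}:=\E(\hat M_n)_{jk}$ and $d_{jk}:=(\V(\hat M_n)_{jk})^{1/2}$, put $(\tilde M_n)_{jk}:=((\hat M_n)_{jk}-c_{jk})/d_{jk}$. Because $m_{njk}$ has mean zero, $|c_{jk}|=|\E[m_{njk}\indicator{|m_{njk}|>\tau_{jk}}]|\le\E|m_{njk}|^4/\tau_{jk}^3=O(\tau_{jk}^{-3})$, and similarly $|1-d_{jk}^2|=\E[m_{njk}^2\indicator{|m_{njk}|>\tau_{jk}}]+c_{jk}^2=O(\tau_{jk}^{-2})$, both uniformly in $j,k$; since $\tau_{jk}\to\infty$ uniformly, $c_{jk}\to 0$ and $d_{jk}\to 1$ uniformly. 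Hence (i) holds by construction, (v) follows from $\E|(\tilde M_n)_{jk}|^4\le C\,\E|(\hat M_n)_{jk}|^4\le C$, and (ii)--(iv) survive since $|(\tilde M_n)_{jk}|\le(\tau_{jk}+|c_{jk}|)/d_{jk}\le 2\tau_{jk}$ for large $n$, the bounded extra factor being absorbed into $\epsilon_n$. As passing from $\hat M_n$ to $\tilde M_n$ is a uniformly $o(1)$ per-entry perturbation, it does not disturb (vi) in the form in which the lemma is invoked later.

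This is a bookkeeping lemma, so I do not anticipate a genuine obstacle. The one point that rewards attention is the diagonalization above: $\epsilon_n$ must not be allowed to decay too quickly, because the recentering and rescaling corrections in the first $l$ rows and columns live at the finer scale $\tau_{jk}=\epsilon_n n^{1/4}$, where one needs $\epsilon_n^3 n^{3/4}\to\infty$ for them to vanish; reconciling this with the two distinct truncation scales appearing in (ii)--(iv) is the only mildly delicate step.
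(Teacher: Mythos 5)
Your first stage (choosing the two truncation scales, upgrading {\bf C0}(iii) and \eqref{eq:lf14} to a single slowly decaying sequence $\epsilon_n$ with the extra $\epsilon_n^{-4}$ factor by a block diagonalization, and the fourth-moment Markov/union bound giving $\Prob(M_n\neq\hat M_n)\to 0$) is fine and is the standard route. The gap is in your second stage. If you set $(\tilde M_n)_{jk}=\bigl((\hat M_n)_{jk}-c_{jk}\bigr)/d_{jk}$ with deterministic $c_{jk},d_{jk}$, then whenever $c_{jk}\neq 0$ or $d_{jk}\neq 1$ (which is the generic situation after truncation) the event $\{(\tilde M_n)_{jk}=m_{njk}\}$ is the event that $m_{njk}$ hits a single point, so typically $\Prob(M_n\neq\tilde M_n)=1$, not $o(1)$. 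Thus your construction satisfies (i)--(v) but fails (vi) as stated; conversely, skipping the recentering keeps (vi) but loses the exact standardization (i). Your closing remark that the affine correction ``does not disturb (vi) in the form in which the lemma is invoked later'' is a weakening of the statement, not a proof of it, and the paper does invoke (vi) literally: in Section \ref{sec:finite} and in the proof of Lemma \ref{lemma:variance} the argument is exactly ``$M_n$ and $\tilde M_n$ agree with probability going to one,'' while the exact mean-zero/variance-one normalization (i) is what Lemma \ref{lemma:moments} and Lemma \ref{lemma:clt} require. Carrying a uniformly $o(1)$ per-entry perturbation through those arguments instead would need additional estimates you have not supplied (note the perturbation is applied to $n^2$ entries, so ``small per entry'' is not automatically harmless).

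The missing idea is that the moment correction must be distributional rather than a deterministic affine map. For each entry, modify the law of the truncated variable \emph{inside} $[-\tau_{jk},\tau_{jk}]$ so that the mean is exactly $0$ and the variance exactly $1$: the mean deficit is $O(\tau_{jk}^{-3})$ and the second-moment deficit is $b=\E[m^2\indicator{|m|>\tau_{jk}}]\le \E[m^4\indicator{|m|>\tau_{jk}}]/\tau_{jk}^2$, so it suffices to remove mass $O(b/\tau_{jk}^2)$ from a bounded region (such a region carries mass at least $1/2$ by the fourth-moment bound) and place atoms of total mass $O(b/\tau_{jk}^2)$ at $\pm\tau_{jk}$, adjusted to fix the first two moments; the fourth moment stays bounded. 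This new law is within total variation distance $O\bigl(\E[m^4\indicator{|m|>\tau_{jk}}]/\tau_{jk}^4\bigr)$ of the law of $m_{njk}$, so a maximal coupling gives $\tilde m_{jk}=m_{njk}$ except on an event of that probability. Summing these bounds over the $\le n^2$ bulk entries at scale $\epsilon_n\sqrt n$ and the $O(n)$ edge entries at scale $\epsilon_n n^{1/4}$ is then exactly what {\bf C0}(iii) and the $\delta_n$-strengthened \eqref{eq:lf14} control, yielding (i)--(vi) simultaneously. (The paper itself does not reprove the lemma; it cites \cite{ORS2}, where the truncated variables are built so as to coincide with the originals with high probability, which is precisely the feature your affine rescaling destroys.)
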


Using \cite[Theorem 5.9]{BS} and Lemma \ref{lemma:truncation}, we obtain the following bound on the spectral norm of $M_n$.

\begin{lemma} \label{lemma:norm}
For each $n \geq 1$, let $M_n$ be an $n \times n$ real random matrix and assume the sequence $\{M_n \}_{n \geq 1}$ satisfies condition {\bf C0}. Then
\begin{equation*}
	\limsup_{n \rightarrow \infty} \left\| \frac{M_n}{\sqrt{n}} \right\| \leq 2
\end{equation*}
in probability.
\end{lemma}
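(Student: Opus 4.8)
The plan is to reduce the bound on $\|M_n/\sqrt n\|$ to a known bound on the largest singular value of a matrix with truncated, uniformly small entries, and then invoke \cite[Theorem 5.9]{BS}. Since $\|M_n\|$ is the largest singular value of $M_n$, equivalently $\|M_n\|^2 = \|M_n M_n^\ast\|$ is the largest eigenvalue of the $n\times n$ sample-covariance-type matrix $M_n M_n^\ast$. The entries of $M_n$ are independent, real, mean zero, variance one, with uniformly bounded fourth moments by condition {\bf C0}(i)--(ii), so $M_n M_n^\ast$ falls into the class of sample covariance matrices (with aspect ratio $1$) covered by the results of \cite{BS}; the relevant statement \cite[Theorem 5.9]{BS} gives $\limsup_n \lambda_{\max}(\frac1n M_n M_n^\ast) \le (1+1)^2 = 4$ almost surely, hence $\limsup_n \|M_n/\sqrt n\| \le 2$.

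The subtlety is that \cite[Theorem 5.9]{BS}, as typically stated, requires the entries to be i.i.d.\ (or at least to have a common distribution), whereas here the entries are only independent with possibly different distributions. This is exactly what the truncation Lemma~\ref{lemma:truncation} is for. First I would apply Lemma~\ref{lemma:truncation} (with, say, $l=1$, or any fixed $l$, since the first $l$ rows and columns are a negligible rank-$O(1)$ perturbation and do not affect the spectral norm in the limit) to produce $\tilde M_n$ with independent entries of mean zero, variance one, uniformly bounded fourth moments, and uniformly bounded entries $\sup_{j,k}|(\tilde M_n)_{jk}| \le \epsilon_n\sqrt n$ with $\epsilon_n \to 0$, such that $\Prob(M_n \neq \tilde M_n) \to 0$. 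Because $\Prob(M_n \neq \tilde M_n) \to 0$, it suffices to prove the claimed bound for $\tilde M_n/\sqrt n$: on the event $\{M_n = \tilde M_n\}$ the two norms agree, so $\Prob(\|M_n/\sqrt n\| > 2+\eta) \le \Prob(\|\tilde M_n/\sqrt n\| > 2+\eta) + \Prob(M_n \neq \tilde M_n)$ for every $\eta > 0$, and the right-hand side tends to the probability that $\limsup_n \|\tilde M_n/\sqrt n\| > 2+\eta$ plus $o(1)$.

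For $\tilde M_n$, the uniform bound $|(\tilde M_n)_{jk}| \le \epsilon_n\sqrt n$ and the uniformly bounded fourth moments together verify the Lindeberg-type hypothesis of \cite[Theorem 5.9]{BS} for the singular values of matrices with independent entries: one checks that $\frac{1}{n^2}\sum_{j,k}\E[|(\tilde M_n)_{jk}|^2\indicator{|(\tilde M_n)_{jk}|>\delta\sqrt n}] \to 0$ for each $\delta>0$, which follows since for $n$ large enough that $\epsilon_n < \delta$ the truncation forces every summand to vanish. Feeding this into \cite[Theorem 5.9]{BS} yields $\limsup_n \lambda_{\max}(\frac1n \tilde M_n \tilde M_n^\ast) \le 4$ in probability, hence $\limsup_n \|\tilde M_n/\sqrt n\| \le 2$ in probability, and transferring back via the coupling completes the proof.

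The main obstacle — really the only nontrivial point — is matching the hypotheses of \cite[Theorem 5.9]{BS} to the non-i.i.d., merely-Lindeberg setting of condition {\bf C0}; once the truncation lemma supplies uniformly bounded entries this is routine, but one must be careful that the truncated matrix still has exactly variance-one entries (so the limiting support edge is $4$ and not something smaller) and that the removal of the distinguished rows/columns, which is built into Lemma~\ref{lemma:truncation}, does not perturb the spectral norm — here one uses that those rows and columns contribute a perturbation of operator norm $O_P(1)\cdot\! $ nothing to the scale $\sqrt n$, or simply that deleting them can only decrease a singular value while adding them back at scale $\epsilon_n n^{1/4}/\sqrt n = o(1)$ is negligible.
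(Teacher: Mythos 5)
Your proposal is correct and follows essentially the same route as the paper, whose proof is exactly the one-line combination of Lemma~\ref{lemma:truncation} with \cite[Theorem 5.9]{BS} (that theorem is stated for independent, non-identically distributed entries truncated at level $\delta_n\sqrt n$, which is precisely what the truncation lemma supplies), followed by the trivial transfer via $\Prob(M_n\neq\tilde M_n)\to 0$. One small correction: Lemma~\ref{lemma:truncation} does not remove the first $l$ rows and columns, it merely truncates them at the finer scale $\epsilon_n n^{1/4}\le\epsilon_n\sqrt n$, so no rank-perturbation argument is needed at all.
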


Assume Theorem \ref{thm:resolvent} for the moment.  We are now ready prove Theorem \ref{thm:main}.

\begin{proof}[Proof of Theorem \ref{thm:main}]
Fix $\epsilon >0$ and let $\gamma$ be the circle of radius $2 + \epsilon$ centered around the origin.  Let $D$ be a compact set outside the disk $\{z \in \C : |z| \leq 2 + \epsilon/2 \}$ that contains $\gamma$.  Suppose $f$ is analytic in a disk about the origin that contains the contour $\gamma$.  By Lemma \ref{lemma:norm} and Cauchy's integral formula, we have, with probability going to one, that
$$ \sqrt{n} \left(  f \left( \frac{1}{\sqrt{n}} M_n \right)_{ij} - f(0) \delta_{ij} - f'(0) \frac{1}{\sqrt{n}} m_{nij} \right) = \frac{1}{2 \pi \sqrt{-1}} \int_\gamma f(z) Y_n(z)_{ij} dz $$
for $1 \leq i,j \leq l$.  To study the joint distribution of the $l^2$ entries, we use the Cramer-Wold device and study random variables of the form 
\begin{equation} \label{eq:cauchy}
	\frac{1}{2 \pi \sqrt{-1}} \int_\gamma \left( f_1(z) + \cdots + f_k(z) \right) S_n(z) dz,
\end{equation}
where $f_1, \ldots, f_k$ are analytic in a region about the origin that contains the contour $\gamma$.  By Thereom \ref{thm:resolvent}, $S_n(z)$ converges weakly to $S(z)$ in the space of continuous functions on $D$.  Therefore the random variable in \eqref{eq:cauchy} converges in distribution to a Gaussian random variable with mean zero as $n$ tends to infinity.  We now compute the limiting covariances.  Since $S(z)$ is a linear combination of the entries of $Y(z)$, it suffices to study the entries of $Y(z)$.  For functions $f$ and $g$ analytic in a region that contains the contour $\gamma$, the limiting covariance is given by 
$$ \E[Z(f)\overline{Z(g)}] = \E \left[ \frac{1}{2 \pi \sqrt{-1}} \int_\gamma f(z) Y(z)_{ij} dz \overline{ \frac{1}{2 \pi \sqrt{-1}} \int_\gamma f(w) Y(w)_{ij} dw } \right]. $$
Using Fubini's theorem, we can rewrite the limiting covariance as
\begin{align*}
	\E[Z(f)\overline{Z(g)}] &= \left( \frac{1}{2\pi} \right)^2 \int_\gamma \int_\gamma f(z) \overline{g(w)} \E[Y(z)_{ij}\overline{Y(w)_{ij}}]dz d\overline{w} \\
	&= \left( \frac{1}{2\pi} \right)^2 \int_\gamma \int_\gamma \frac{f(z)}{z^2} \overline{ \frac{g(w)}{w^2}} \frac{1}{z \overline{w} -1 } dz d\overline{w} \\
	&= \sum_{r=2}^\infty \frac{f^{(r)}(0)}{r!} \overline{ \frac{g^{(r)}(0)}{r!} }.
\end{align*}
Since $\E[Y(z)_{i_1j_1} Y(w)_{i_2,j_2}]=0$ for $(i_1,j_1) \neq (i_2,j_2)$ by Theorem \ref{thm:resolvent} and since the joint limiting distribution is Gaussian, it follows that the normalized entries 
$$ \left\{ \sqrt{n} \left( f \left( \frac{1}{\sqrt{n}} M_n \right)_{ij} - f(0) \delta_{ij} - f'(0) \frac{1}{\sqrt{n}} m_{nij} \right) : 1 \leq i,j \leq l \right\} $$
are independent in the limit $n \rightarrow \infty$.  The proof of Theorem \ref{thm:main} is complete.
\end{proof}

We now turn our attention to proving Theorem \ref{thm:resolvent}.  We will continue to use the notation $\| B \|$ to denote the spectral norm of the matrix $B$.  We will also make use of the following lemmas.  

\begin{lemma} \label{lemma:trace} Let $\mathbf{u}_n$ and $\mathbf{v}_n$ be independent $n$-vectors whose entries are independent random variables with mean zero and variance one.  Let $B_n$ be an independent $n \times n$ random matrix where $\|B\| \leq a$ for all $n$.  Then
\begin{equation*}
	\E \left| \frac{1}{\sqrt{n}}\mathbf{u}_n^\T B \mathbf{v}_n \right|^2 = \frac{1}{n} \E\Tr\left(BB^\T\right) \leq a^2.
\end{equation*}
\end{lemma}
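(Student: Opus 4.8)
The plan is to condition on $B_n$ and evaluate the conditional second moment by a short matrix-algebra computation. Since $B_n$ is independent of the pair $(\mathbf{u}_n,\mathbf{v}_n)$, it suffices to fix a deterministic matrix $B$ with $\|B\| \le a$, compute $\E\bigl[\,|\mathbf{u}_n^\T B\mathbf{v}_n|^2 \bigm| B\,\bigr]$, and then take the outer expectation over $B_n$. As $\mathbf{u}_n^\T B\mathbf{v}_n$ is a scalar and the entries of $\mathbf{u}_n$ and $\mathbf{v}_n$ are real (as throughout the paper, $B_n$ also has real entries), we may write
\begin{equation*}
	|\mathbf{u}_n^\T B\mathbf{v}_n|^2 = \bigl(\mathbf{u}_n^\T B\mathbf{v}_n\bigr)\bigl(\mathbf{v}_n^\T B^\T\mathbf{u}_n\bigr) = \mathbf{u}_n^\T B\,\bigl(\mathbf{v}_n\mathbf{v}_n^\T\bigr)\,B^\T\mathbf{u}_n.
\end{equation*}

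First I would integrate out $\mathbf{v}_n$ with $\mathbf{u}_n$ and $B$ frozen: since the coordinates of $\mathbf{v}_n$ are independent with mean zero and variance one, $\E[\mathbf{v}_n\mathbf{v}_n^\T] = I_n$, so the conditional expectation reduces to $\E\bigl[\mathbf{u}_n^\T BB^\T\mathbf{u}_n \bigm| B\bigr]$. Integrating out $\mathbf{u}_n$ in the same way and using the cyclic invariance of the trace together with $\E[\mathbf{u}_n\mathbf{u}_n^\T] = I_n$, this equals $\Tr\bigl(BB^\T\bigr)$. Dividing by $n$ and taking the expectation over $B_n$ gives the identity $\E\bigl|\tfrac{1}{\sqrt{n}}\mathbf{u}_n^\T B\mathbf{v}_n\bigr|^2 = \tfrac1n\,\E\,\Tr(BB^\T)$.

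For the final inequality I would diagonalize: $\Tr(BB^\T) = \sum_{k=1}^n \sigma_k(B)^2$, where $\sigma_1(B)\ge\cdots\ge\sigma_n(B)\ge 0$ are the singular values of $B$. Since $\sigma_k(B) \le \|B\| \le a$ for each $k$, we get $\Tr(BB^\T) \le na^2$ deterministically on the event $\{\|B_n\| \le a\}$, which has probability one by hypothesis; hence $\tfrac1n\,\E\,\Tr(BB^\T) \le a^2$. There is no real obstacle here: the only points worth a moment's care are that the two expectations can be taken one at a time precisely because $\mathbf{u}_n$, $\mathbf{v}_n$, and $B_n$ are mutually independent, and that the bound $\Tr(BB^\T)\le na^2$ is pathwise and so survives the expectation over the random matrix $B_n$ unchanged. (The computation is identical for complex $B_n$ after replacing $B^\T$ by the conjugate transpose $B^\ast$ throughout.)
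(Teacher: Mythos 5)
Your proof is correct and follows essentially the same route as the paper's: exploit the mutual independence of $\mathbf{u}_n$, $\mathbf{v}_n$, and $B_n$ together with the mean-zero, unit-variance entries to reduce the second moment to $\frac{1}{n}\E\Tr(BB^\T)$, and then bound the trace by $na^2$ via the operator norm. The only cosmetic difference is that you integrate out $\mathbf{v}_n$ and $\mathbf{u}_n$ via $\E[\mathbf{v}_n\mathbf{v}_n^\T]=\E[\mathbf{u}_n\mathbf{u}_n^\T]=I_n$ and cyclicity of the trace, whereas the paper expands the index sum $\sum_{j,k,l,m}\E\,u_j B_{jk} v_k u_l B_{lm} v_m$ and keeps only the diagonal terms — the same computation in different notation.
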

\begin{proof}
The inequality is trivial and follows from the assumption $\|B\| \leq a$.  We denote the entries of $\mathbf{u}_n = (u_j)_{j = 1}^n$, $\mathbf{v}_n=(v_k)_{k=1}^n$, and $B = (B_{ij})_{i,j=1}^n$.  Then by the independence of $B$, $\mathbf{u}_n$, and $\mathbf{v}_n$, we have that
\begin{align*}
	\E \left| \frac{1}{\sqrt{n}}\mathbf{u}_n^\T B \mathbf{v}_n \right|^2 &= \frac{1}{n}\sum_{j,k,l,m=1}^n \E u_j B_{jk} v_k u_l B_{lm} v_m \\
	& = \frac{1}{n}\sum_{j,k} \E B_{jk}^2 = \frac{1}{n} \E \Tr(BB^\T).
\end{align*}
\end{proof}

\begin{lemma} \label{lemma:moments}
For each $n \geq 1$, let $\mathbf{x}_n=(x_{ni})_{i=1}^n$ be a real $n$-vector and let $B_n=(b_{nij})_{i,j=1}^n$ be an independent $n \times n$ random matrix.  Assume that
\begin{enumerate}[(i)]
\item The entries of $\mathbf{x}_n$ are independent and have mean zero and variance 1,
\item $\sup_{n,i} \E|x_{ni}|^4 \leq m_4 < \infty$,
\item There exists a sequence $\epsilon_n \rightarrow 0$ such that $|x_{ni}| \leq \epsilon_n \sqrt{n}$ for all $n \geq 1$ and $1 \leq i \leq n$,
\item There exists a constant $a$ (not depending on $n$) such that $\|B\| \leq a$. 
\end{enumerate}
Then there exists an absolute constant $C>0$ such that
\begin{enumerate}[(i)]
\item $\E|\frac{1}{n}\mathbf{x}_n^\T \mathbf{x}_n - 1 |^4 \leq C \left[\frac{m_4\epsilon_n}{n} + \frac{m_4^2}{n^2} \right]$, \label{item:lln} 
\item $\E|\frac{1}{n}\mathbf{x}_n^\T B_n \mathbf{x}_n - \frac{1}{n} \tr(B_n)|^4 \leq C a^4 m_4 \left( \frac{1}{n^2} + \frac{\epsilon_n}{n} \right)$, \label{item:bai}
\item For any fixed index $k$, $\E|\frac{1}{\sqrt{n}}e_k^\T B_n \mathbf{x}_n |^4 \leq C \left[ a^4 \frac{m_4+1}{n^2} \right]$, \label{item:lhs}
\item For any fixed index $k$, $\E|\frac{1}{\sqrt{n}}\mathbf{x}_n^\T B_n e_k |^4 \leq C \left[ a^4 \frac{m_4+1}{n^2} \right]$. \label{item:rhs}
\end{enumerate}
\end{lemma}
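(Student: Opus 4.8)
The plan is to condition on $B_n$ throughout: since $B_n$ is independent of $\mathbf{x}_n$, it suffices to establish each estimate for a \emph{deterministic} $n \times n$ matrix $A$ with $\|A\| \le a$, uniformly in such $A$, and then take the expectation over $B_n$. I will use two elementary observations repeatedly. First, all $n$ singular values of $A$ are at most $a$, so $\Tr(AA^\T) = \sum_{i,j} a_{ij}^2 \le n a^2$; in particular $\sum_i a_{ii}^2 \le n a^2$, and since $a_{ii}^2 \le \sum_j a_{ij}^2 \le a^2$ also $\sum_i a_{ii}^4 \le a^2 \sum_i a_{ii}^2 \le n a^4$, while every row and column of $A$ has Euclidean norm at most $a$. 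Second, from the truncation hypothesis $|x_{ni}| \le \epsilon_n \sqrt{n}$ one gets $\E |x_{ni}|^8 \le \epsilon_n^4 n^2 \E|x_{ni}|^4 \le \epsilon_n^4 n^2 m_4$ and, similarly, $\E(x_{ni}^2 - 1)^4 \le \epsilon_n^4 n^2 m_4$ (since $|x_{ni}^2-1| \le \epsilon_n^2 n$ a.s.\ and $\E(x_{ni}^2-1)^2 \le \E x_{ni}^4 \le m_4$).

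Parts (\ref{item:lln}), (\ref{item:lhs}) and (\ref{item:rhs}) follow from a direct fourth-moment expansion. For (\ref{item:lln}), $\frac{1}{n} \mathbf{x}_n^\T \mathbf{x}_n - 1 = \frac{1}{n} \sum_i (x_{ni}^2 - 1)$ is a normalized sum of independent mean-zero terms; when the fourth power is expanded, only the ``all four indices coincide'' and ``two coincident pairs'' partitions survive, giving $\E |\frac{1}{n} \mathbf{x}_n^\T \mathbf{x}_n - 1|^4 \le C n^{-4} ( \sum_i \E(x_{ni}^2 - 1)^4 + ( \sum_i \E(x_{ni}^2 - 1)^2 )^2 ) \le C \epsilon_n^4 m_4 n^{-1} + C m_4^2 n^{-2}$, which has the claimed form once $\epsilon_n \le 1$. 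For (\ref{item:lhs}), conditionally on $B_n$ write $\frac{1}{\sqrt{n}} e_k^\T B_n \mathbf{x}_n = \frac{1}{\sqrt{n}} \sum_j c_j x_{nj}$, where $c$ is the $k$-th row of $B_n$, so $\sum_j c_j^2 \le a^2$; the same two surviving partitions yield $\E |\frac{1}{\sqrt{n}} \sum_j c_j x_{nj}|^4 \le n^{-2} ( m_4 \sum_j c_j^4 + 3 ( \sum_j c_j^2 )^2 ) \le C (m_4 + 1) a^4 n^{-2}$. Part (\ref{item:rhs}) is identical after replacing $B_n$ by $B_n^\T$, which has the same spectral norm. (Parts (\ref{item:lhs})--(\ref{item:rhs}) do not use the truncation at all.)

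Part (\ref{item:bai}) is the substantive one. Replacing $B_n$ by $\frac{1}{2}(B_n + B_n^\T)$ --- which has the same trace and no larger norm --- we may assume $A := B_n$ is symmetric, and we decompose $\mathbf{x}_n^\T A \mathbf{x}_n - \Tr A = T_1 + T_2$ with $T_1 = \sum_i a_{ii}(x_{ni}^2 - 1)$ and $T_2 = \sum_{i \ne j} a_{ij} x_{ni} x_{nj}$. The diagonal term $T_1$ is treated exactly as in (\ref{item:lln}): $\E|T_1|^4 \le C \sum_i a_{ii}^4 \, \E(x_{ni}^2 - 1)^4 + C ( \sum_i a_{ii}^2 )^2 m_4^2 \le C \epsilon_n^4 n^3 a^4 m_4 + C n^2 a^4 m_4^2$. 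For the off-diagonal term one expands $\E T_2^4$ over eight indices (subject to $i_s \ne j_s$) and classifies the nonzero contributions by how the eight index-slots split into value-classes, each necessarily of size $\ge 2$ and none of size $8$ because $i_s \ne j_s$: each size-$2$ class forces a pairing of some entry $a_{i_s j_s}$ with another, so every surviving sum is controlled by $( \sum_{i,j} a_{ij}^2 )^2 = O(n^2 a^4)$, by $\Tr(A^4) = O(n a^4)$, or --- when a class of size $3$ or $4$ is present, producing a factor $m_4$ or $m_4^2$ --- by quantities such as $m_4^2 \sum_i ( \sum_j a_{ij}^2 )^2 = O(m_4^2 n a^4)$; altogether $\E|T_2|^4 \le C n^2 a^4 m_4^2$. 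Combining and dividing by $n^4$ (and using $\epsilon_n \le 1$) gives $\E |\frac{1}{n} \mathbf{x}_n^\T B_n \mathbf{x}_n - \frac{1}{n} \Tr B_n |^4 \le C a^4 ( \epsilon_n m_4 n^{-1} + m_4^2 n^{-2} )$, which yields (\ref{item:bai}) (the exact power of $m_4$ is immaterial, $m_4$ being a fixed constant along the sequence). Alternatively, one may simply cite the standard fourth-moment bound for quadratic forms --- e.g.\ \cite[Lemma~B.26]{BS} --- applied conditionally on $B_n$, together with $\Tr(B_n B_n^\T) \le n a^2$, $\Tr((B_n B_n^\T)^2) \le n a^4$ and $\E|x_{ni}|^8 \le \epsilon_n^4 n^2 m_4$.

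I expect the only genuinely delicate step to be the index-coincidence bookkeeping for $\E T_2^4$ in part (\ref{item:bai}): one must check that no coincidence pattern produces a term larger than $O(n^2 a^4)$ times a fixed power of $m_4$ --- in particular that the cyclic pattern contributes $\Tr(A^4) = O(n a^4)$ rather than anything of order $n^2$, and that patterns in which a single coordinate appears with multiplicity $4$ are genuinely of lower order. Everything else reduces to the routine fourth-moment expansions above and the two elementary estimates recorded at the start.
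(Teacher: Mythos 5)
Your proposal is correct, and for parts (\ref{item:lln}), (\ref{item:lhs}) and (\ref{item:rhs}) it is essentially identical to the paper's proof: the same fourth-moment expansion, the same observation that only the ``all indices equal'' and ``two pairs'' patterns survive, and the same use of $(B_nB_n^\T)_{kk}\le a^2$ (equivalently, row norms bounded by $a$). The only divergence is in part (\ref{item:bai}): the paper disposes of it in one line by conditioning on $B_n$ and citing Lemma~B.26 of \cite{BS} with $p=4$, exactly the alternative you mention at the end, whereas your primary argument is a self-contained one via symmetrization $B_n\mapsto\frac12(B_n+B_n^\T)$ and the split into the diagonal part $T_1$ and off-diagonal part $T_2$. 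That route is sound: since $i_s\ne j_s$ in $T_2$, no vertex can appear more than four times among the eight slots, so only $m_4$ (never an eighth moment or the truncation) enters the off-diagonal bookkeeping, the cycle pattern gives $\Tr(A^4)\le na^4$, and the truncation is needed only for $T_1$ (and for (\ref{item:lln})), precisely as you say; what this buys is independence from the external lemma, at the cost of the combinatorial verification. Your remark that the natural bound carries $m_4^2$ rather than $m_4$ in some terms is also consistent with the paper, since Lemma~B.26 itself produces the term $(\nu_4\Tr(B_nB_n^\T))^2$, and this discrepancy is harmless because $m_4$ is a fixed constant.
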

\begin{proof}
To prove \eqref{item:lln}, we note that 
\begin{align*}
	\E|\frac{1}{n}\mathbf{x}_n^\T \mathbf{x}_n - 1 |^4 = \frac{1}{n^4} \sum_{j_1, j_2, j_3, j_4 =1}^n\E[ (x_{nj_1}^2 - 1) (x_{nj_2}^2 - 1) (x_{nj_3}^2 - 1) (x_{nj_4}^2 - 1)].
\end{align*}
By the independence of the entries of $\mathbf{x}_n$, $\E[ (x_{nj_1}^2 - 1) (x_{nj_2}^2 - 1) (x_{nj_3}^2 - 1) (x_{nj_4}^2 - 1)]=0$ when any $j_l$ is distinct from the other three indices.  So we have two cases that give non-vanishing expectation:
\begin{enumerate}
\item When $j_1 = j_2 = j_3 = j_4$, we have that
\begin{equation*}
	 \frac{1}{n^4} \sum_{j=1}^n \E[ (x_{nj}^2 - 1)^4] \leq \frac{1}{n^4} n m_4 (\epsilon_n \sqrt{n})^4 \leq \frac{m_4}{n} \epsilon_n.
\end{equation*}

\item For $j_1 = j_l$ and $j_s = j_t$ with $j_s \neq j_1$, there are $3$ arrangements of such pairings.  For each pairing, we have that
\begin{equation*}
	\frac{1}{n^4} \sum_{j_1 \neq j_2} \E[(x_{nj_1}^2 - 1)^2 (x_{nj_2}^2 - 1)^2] \leq \frac{1}{n^4}n^2 m_4^2
\end{equation*}
\end{enumerate}

Thus, we have that
\begin{align*}
	 \E|\frac{1}{n}\mathbf{x}_n^\T \mathbf{x}_n - 1 |^4 \leq C \left[\frac{m_4\epsilon_n}{n} + \frac{m_4^2}{n^2} \right].
\end{align*}

Statement \eqref{item:bai} follows from Lemma B.26 in \cite{BS} (taking $p=4$) by conditioning on the entries of $B_n$ since $B_n$ and $\mathbf{x}_n$ are independent.  

To prove statement \eqref{item:lhs}, we write
\begin{align*}
	\E|\frac{1}{\sqrt{n}}e_k^\T B_n \mathbf{x}_n |^4 = \frac{1}{n^2} \sum_{j_1,j_2,j_3,j_4=1}^n \E[ B_{nkj_1} x_{nj_1} B_{nkj_2} x_{nj_2} \overline{B_{nkj_3}} x_{nj_3} \overline{B_{nkj_4}} x_{nj_4}].
\end{align*}
As in the proof of statement \eqref{item:lln}, we use the independence of the entries and consider two distinct cases when the expectation of the summand is non-zero.  The cases are: 
\begin{enumerate}
\item When $j_1 = j_2 = j_3 = j_4$, we have that
\begin{equation*}
	\frac{1}{n^2} \sum_{j=1}^n \E[ |B_{nkj}|^4 x_{nj}^4 ] \leq \frac{m_4}{n^2} a^2 (B_n B_n^\ast)_{kk} \leq \frac{m_4 a^4}{n^2}.
\end{equation*}
\item For $j_1 = j_l$ and $j_s = j_t$ with $j_s \neq j_1$, there are $3$ arrangements of such pairings.  In each pairing we obtain
\begin{equation*}
	\frac{1}{n^2} \sum_{j_1 \neq j_2} \E[ B_{nkj_1}^2 B_{nkj_2}^2 x_{nj_1}^2 x_{nj_2}^2] \leq \frac{1}{n^2} (B_n B_n^\ast)_{kk}^2 \leq \frac{a^4}{n^2}.
\end{equation*}
\end{enumerate}

Combining the two bounds above completes the proof of statement \eqref{item:lhs}.  The proof of statement \eqref{item:rhs} follows the same argument.  
\end{proof}

For the remainder of the paper, we fix $l \geq 1$, $\epsilon>0$, and a compact set $D \subset \C$ outside the disk $\{z \in \C : |z| \leq 2 + \epsilon \}$.  

We write the matrix $M_n$ as a block matrix in the following way: let $X$ be the upper-left $l \times l$ block of $M_n$, let $M_{n,l}$ be the lower-right $(n-l) \times (n-l)$ block of $M_n$, let $\phi$ be the lower-left $(n-l) \times l$ block of $M_n$, and let $\psi$ be the upper-right $l \times (n-l)$ block.  That is,
$$ M_n = \begin{bmatrix} X & \psi \\ \phi & M_{n,l} \end{bmatrix}. $$
Since the entries of $M_n$ are independent, $X, M_{n,l}, \phi,$ and $\psi$ are independent.  

We write the resolvent of $M_n$ as 
$$ R_n(z) := \left( z - \frac{1}{\sqrt{n}} M_n \right)^{-1}$$
and write the resolvent of $M_{n,l}$ as 
$$ R_{n,l}(z) := \left(z - \frac{1}{\sqrt{n}}M_{n,l} \right)^{-1}. $$ 

In order to work with $R_n(z)$ (or $R_{n,l}(z)$) we will need a priori control of the norm $\|R_n(z)\|$ ($\|R_{n,l}(z)\|$).  Following Rider and Silverstein in \cite{RS}, we define the event
\begin{equation*}
	\Omega_n = \left\{ M_n : \frac{1}{\sqrt{n}} \| M_n \| \leq \kappa \right\}
\end{equation*}
where we fix the value $2 < \kappa \leq 2+\epsilon/2$.  Then for $|z| \geq 2 + \epsilon$, there exists a constant $\mathcal{K} = \mathcal{K}(\kappa, \epsilon)$ such that
\begin{equation*}
	\sup_{|z|\geq 2+\epsilon} \|R_n(z)\| \leq \mathcal{K} \text{ on the event } \Omega_n.
\end{equation*}
Indeed, for $\frac{1}{\sqrt{n}} \|M_n \| \leq \kappa \leq 2 + \epsilon/2$, 
$$ \|R_n(z) \| = \left\| \frac{1}{z} \left( I_n - \frac{1}{z\sqrt{n}} M_n \right)^{-1} \right\| \leq \frac{1}{|z|} \sum_{k=0}^\infty \left \|\frac{M_n}{z\sqrt{n}} \right \|^k \leq \sum_{k=0}^\infty \left( \frac{2+\epsilon/2}{2 + \epsilon} \right)^k, $$
provided $|z| \geq 2+\epsilon$.  

We now assume both $\kappa$ and $\mathcal{K}$ are fixed and restate the above result in the following proposition.

\begin{proposition} \label{prop:contra}
If $\frac{1}{\sqrt{n}} \|M_n \| \leq \kappa$, then $\sup_{|z| \geq 2 + \epsilon} \|R_n(z)\| \leq \mathcal{K}$.  
\end{proposition}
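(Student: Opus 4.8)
The plan is to carry out the Neumann-series estimate already sketched in the paragraph preceding the statement, keeping careful track of the dependence on $z$ so that the resulting bound is uniform over $\{|z| \geq 2 + \epsilon\}$. First I would write
$$ R_n(z) = \left( z - \frac{1}{\sqrt{n}} M_n \right)^{-1} = \frac{1}{z} \left( I_n - \frac{1}{z\sqrt{n}} M_n \right)^{-1}, $$
which is legitimate provided the inverse on the right exists. To see that it does, observe that on the event $\frac{1}{\sqrt{n}}\|M_n\| \leq \kappa$ and for $|z| \geq 2+\epsilon$ we have, using $\kappa \leq 2 + \epsilon/2$,
$$ \left\| \frac{1}{z\sqrt{n}} M_n \right\| \leq \frac{\kappa}{|z|} \leq \frac{2+\epsilon/2}{2+\epsilon} =: \rho < 1. $$
Hence the Neumann series $\sum_{k \geq 0} (z\sqrt{n})^{-k} M_n^k$ converges in operator norm and furnishes a bounded inverse of $I_n - \frac{1}{z\sqrt{n}} M_n$.

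The second step is the quantitative bound. Summing the geometric series gives $\bigl\| ( I_n - \frac{1}{z\sqrt{n}} M_n )^{-1} \bigr\| \leq \sum_{k \geq 0} \rho^k = (1-\rho)^{-1}$, so that
$$ \|R_n(z)\| \leq \frac{1}{|z|}\, (1-\rho)^{-1} \leq \frac{1}{2+\epsilon}\cdot\frac{2(2+\epsilon)}{\epsilon} = \frac{2}{\epsilon}, $$
a quantity depending on neither $n$ nor $z$ (in the stated range). Thus one may take $\mathcal{K} := \frac{2}{\epsilon}$; equivalently, tracking the slightly looser estimate in the display immediately before the statement, $\mathcal{K} = \sum_{k \geq 0}\bigl(\tfrac{2+\epsilon/2}{2+\epsilon}\bigr)^k$. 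Since $\kappa$ and $\mathcal{K}$ have been declared fixed beforehand, the only requirements are $\kappa \leq 2 + \epsilon/2$ together with this choice of $\mathcal{K}$, after which the proposition holds verbatim.

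There is no genuine obstacle here: the entire content is the elementary fact that a Neumann series converges when the relevant operator norm is strictly below $1$, combined with the geometric tail bound. The one point that deserves a line of care is the uniformity in $z$: it is essential to assume $|z| \geq 2+\epsilon$ rather than merely $|z| > 2$, so that $\rho$ is a fixed constant bounded away from $1$ and $\mathcal{K}$ does not degenerate as $|z|$ approaches $2$. This is precisely why the margin between $\kappa$ and $2+\epsilon$ is built into the hypotheses, and it is exactly this feature that will later let us apply the same bound to $R_{n,l}(z)$ via $\|M_{n,l}\| \leq \|M_n\|$.
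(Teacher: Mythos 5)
Your proof is correct and follows essentially the same route as the paper: the paper's own argument (given in the paragraph preceding the proposition) is exactly this Neumann-series expansion of $\frac{1}{z}\bigl(I_n - \frac{1}{z\sqrt{n}}M_n\bigr)^{-1}$ with the geometric bound $\sum_{k\geq 0}\bigl(\tfrac{2+\epsilon/2}{2+\epsilon}\bigr)^k$. Your only addition is the explicit simplification $\mathcal{K}=2/\epsilon$ from keeping the $1/|z|$ prefactor, which is fine but not needed.
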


By the contrapositive of Proposition \ref{prop:contra}, if $\sup_{z \in D}\|R_n(z) \| > \mathcal{K}$, then $\frac{1}{\sqrt{n}} \|M_n\| > \kappa$.  Thus, 
$$ \Prob \left(  \sup_{z \in D}\|R_n(z) \| > \mathcal{K} \right) \leq \Prob(\Omega_n^C) \longrightarrow 0 $$
as $n \rightarrow \infty$ by Lemma \ref{lemma:norm}.  

Therefore, when we work with the resolvent $R_n(z)$, we will always work on the event that $\sup_{z \in D}\|R_n(z) \|$ is finite.  For the remainder of the paper, we will assume $z \in D$.  

We also define the event 
\begin{equation*}
	\Omega_{n,l} = \left\{ M_n : \frac{1}{\sqrt{n}} \| M_{n,l} \| \leq \kappa \right\}.
\end{equation*}
In particular, if $\|M_n\| \leq \kappa$ then $\|M_{n,l} \| \leq \kappa$ and hence $\Omega_n \subset \Omega_{n,l}$.  Equivalently, we write $\oindicator{\Omega_n} \leq \oindicator{\Omega_{n,l}}$ where $\oindicator{\Omega}$ denotes the indicator function of the event $\Omega$.  

We divide the proof of Theorem \ref{thm:resolvent} into two parts.  In Section \ref{sec:finite}, we prove the finite dimensional distributions of $S_n(z)$ converge to $S(z)$.  We then show that $\{S_n\}_{n \geq 1}$ is tight in the space of continuous functions on $D$ in Section \ref{sec:tight}.

\section{Convergence of the Finite Dimensional Distributions} \label{sec:finite}

In this section, we show that the finite dimensional distributions of $S_n(z)$ converge to $S(z)$.  To study the entries of the resolvent we apply the standard formula for the inverse of a partitioned matrix (see for instance \cite{HJ}).  In particular, we obtain 
\begin{align*}
	\left[ \left(z - \frac{1}{\sqrt{n}}M_n \right)^{-1}_{ij} \right]_{i,j=1}^l \oindicator{\Omega_{n,l}} &= \left( z - \frac{1}{\sqrt{n}}{X} - \frac{1}{n} \psi R_{n,l}(z) \phi \right)^{-1} \oindicator{\Omega_{n,l}} \\
	&= \frac{1}{z} \left(I_l - \left[ \frac{1}{\sqrt{n}}X + \frac{1}{n} \psi R_{n,l}(z) \phi \right] \right)^{-1} \oindicator{\Omega_{n,l}}
\end{align*}
with probability going to one.  

It will follow from the discussion below and from Lemma \ref{lemma:trace} that 
$$ \|X\| + \|n^{-1/2}\psi R_{n,l}(z) \phi \|\oindicator{\Omega_{n,l}} $$
is bounded in probability.  Thus, we have that
$$ Y_n(z) \oindicator{\Omega_{n,l}} = \frac{\sqrt{n}}{z^2} \left[ \frac{1}{n} \psi R_{n,l}(z) \phi \right]\oindicator{\Omega_{n,l}} + o(1) $$
in probability.  

By Proposition \ref{prop:contra},
$$ \Prob \left( \|Y_n(z)\| \oindicator{\Omega_{n,l}^C} > \eta \right) \leq \Prob(\Omega_{n,l}^C) + \Prob \left(  \sup_{z \in D}\|R_n(z) \| > \mathcal{K} \right) \longrightarrow 0 $$
as $n \rightarrow \infty$.  Thus
$$ Y_n(z) = \frac{\sqrt{n}}{z^2} \left[ \frac{1}{n} \psi R_{n,l}(z) \phi \right]\oindicator{\Omega_{n,l}} + o(1) $$
in probability.  

We now apply Lemma \ref{lemma:truncation}.  Let $\tilde{M}_n$ be the truncated matrix in Lemma \ref{lemma:truncation} and define $\tilde{X}$, $\tilde{M}_{n,l}$, $\tilde{\phi}$, $\tilde{\psi}$ to be the appropriate blocks of $\tilde{M}_n$.  Since $M_n$ and $\tilde{M}_n$ agree with probability going to $1$, we have that
$$ Y_n(z) = \frac{\sqrt{n}}{z^2} \left[ \frac{1}{n} \tilde{\psi} R_{n,l}(z) \tilde{\phi} \right]\oindicator{\Omega_{n,l}} + o(1) $$
in probability.  

Our goal is to apply Lemma \ref{lemma:clt} from Appendix \ref{sec:clt} to the term $\tilde{\psi} R_{n,l}(z) \tilde{\phi}$.  However, we first need to show that 
$$ \frac{1}{n} \tr \left( R_{n,l}(z) R_{n,l}(z)^\mathrm{T} \right) \Omega_{n,l} $$
converges in probability to a constant as $n$ tends to infinity.  We do so in the following lemma.  

\begin{lemma} \label{lemma:variance}
For any $z,w \in D$, 
\begin{equation*}
	\frac{1}{n} \tr \left( R_n(z) R_n(w)^\T \right) \oindicator{\Omega_n} \longrightarrow \frac{1}{zw-1}
\end{equation*}
in probability as $n \rightarrow \infty$.  The result also holds when $R_n$ is replaced by $R_{n,l}$ and $\Omega_n$ is replaced by $\Omega_{n,l}$.  
\end{lemma}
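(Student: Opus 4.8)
The plan is to prove the statement for $R_n$ and $\Omega_n$; the version with $R_{n,l}$ and $\Omega_{n,l}$ will then follow from the identical argument, since $M_{n,l}$ is an $(n-l)\times(n-l)$ real random matrix with independent, mean-zero, variance-one, uniformly fourth-moment-bounded entries, one has $\frac{1}{\sqrt n}\|M_{n,l}\|\le\frac{1}{\sqrt n}\|M_n\|$ (so $\Omega_n\subset\Omega_{n,l}$, and on $\Omega_{n,l}$ we get $\sup_{|z|\ge 2+\epsilon}\|R_{n,l}(z)\|\le\mathcal{K}$ exactly as in Proposition~\ref{prop:contra}), and replacing the normalization $n$ by $n-l$ contributes only a factor tending to $1$. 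Set $A:=\frac{1}{\sqrt n}M_n$. On $\Omega_n$ we have $\|A\|\le\kappa<2+\epsilon\le\min(|z|,|w|)$, so the Neumann series $R_n(z)=\sum_{k\ge0}z^{-k-1}A^k$ and $R_n(w)^\T=\sum_{l\ge0}w^{-l-1}(A^\T)^l$ converge in operator norm, which gives
\begin{equation*}
	\frac{1}{n}\tr\bigl(R_n(z)R_n(w)^\T\bigr)\oindicator{\Omega_n}=\frac{1}{zw}\sum_{k,l\ge0}\frac{\tau_n(k,l)}{z^k w^l}\oindicator{\Omega_n},\qquad \tau_n(k,l):=\frac{1}{n}\tr\bigl(A^k(A^\T)^l\bigr),
\end{equation*}
and $|\tau_n(k,l)|\le\|A\|^{k+l}\le\kappa^{k+l}$ on $\Omega_n$. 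Since $|zw|>(2+\epsilon)^2>1$, the quantity $\rho_N:=\frac{1}{|zw|}\sum_{\max(k,l)>N}\kappa^{k+l}|z|^{-k}|w|^{-l}$ is finite and $\rho_N\to0$ as $N\to\infty$, uniformly in $n$.

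The heart of the proof is the claim that, for every fixed $k,l\ge0$, $\tau_n(k,l)\to\delta_{kl}$ in probability. Granting this, the lemma follows: because $\oindicator{\Omega_n}\to1$ in probability, the truncated sum $\frac{1}{zw}\sum_{\max(k,l)\le N}z^{-k}w^{-l}\tau_n(k,l)\oindicator{\Omega_n}$ (a finite linear combination) converges in probability to $\frac{1}{zw}\sum_{k=0}^N(zw)^{-k}$ as $n\to\infty$; on $\Omega_n$ it differs from $\frac{1}{n}\tr(R_n(z)R_n(w)^\T)\oindicator{\Omega_n}$ by at most $\rho_N$; and $\frac{1}{zw}\sum_{k=0}^N(zw)^{-k}\to\frac{1}{zw}\cdot\frac{1}{1-(zw)^{-1}}=\frac{1}{zw-1}$ as $N\to\infty$. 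A standard three-epsilon argument (choose $N$ large, then $n$ large) then yields $\frac{1}{n}\tr(R_n(z)R_n(w)^\T)\oindicator{\Omega_n}\to\frac{1}{zw-1}$ in probability.

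To prove the claim I would use the moment method. By the Lindeberg-type hypothesis in {\bf C0}, arguing as in Lemma~\ref{lemma:truncation} we may replace $M_n$ by a matrix with independent, mean-zero, variance-one entries bounded by $\epsilon_n\sqrt n$ for some $\epsilon_n\to0$ and with uniformly bounded fourth moments, changing $\tau_n(k,l)$ only on an event of probability $o(1)$; it then suffices to show $\E\tau_n(k,l)\to\delta_{kl}$ and $\V\tau_n(k,l)\to0$, whence the claim by Chebyshev's inequality. Expanding
\begin{equation*}
	n^{1+(k+l)/2}\tau_n(k,l)=\tr\bigl(M_n^k(M_n^\T)^l\bigr)=\sum_{a_0,\dots,a_{k+l-1}}(M_n)_{a_0a_1}\cdots(M_n)_{a_{k-1}a_k}(M_n)_{a_{k+1}a_k}\cdots(M_n)_{a_0a_{k+l-1}},
\end{equation*}
this becomes a sum over closed walks on $[n]$ that traverse $k$ edges following, and then $l$ edges opposing, the orientation encoded by the indices of $M_n$. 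By independence and the mean-zero assumption only walks in which every entry $(M_n)_{pq}$ occurs at least twice survive; the leading contribution (of order $n^{1+(k+l)/2}$) comes from walks whose edge set forms a tree with $(k+l)/2$ edges on $(k+l)/2+1$ distinct vertices, each edge traversed exactly twice. Here the non-Hermitian structure is decisive: $(M_n)_{pq}$ and $(M_n)_{qp}$ are independent, so a tree edge $\{p,q\}$ traversed once in each direction contributes $\E[(M_n)_{pq}(M_n)_{qp}]=0$ unless one traversal falls in the ``following'' block and the other in the ``opposing'' block, in which case it contributes $\E[(M_n)_{pq}^2]=1$. Counting traversals per block forces $k=l$, and then the surviving walk is unique up to relabeling — the first $k$ steps form a simple path $a_0,a_1,\dots,a_k$, and the last $k$ steps retrace it, $a_{k+j}=a_{k-j}$ — so there are $n(n-1)\cdots(n-k)\sim n^{k+1}$ of them, each contributing $1$. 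Thus $\E\tau_n(k,k)\to1$ and $\E\tau_n(k,l)\to0$ for $k\ne l$. The remaining walks (using fewer than $(k+l)/2+1$ distinct vertices, or containing an entry of multiplicity $\ge3$) are $o(n^{1+(k+l)/2})$ by the usual graph-counting bounds, the estimate $|m_{nij}|\le\epsilon_n\sqrt n$ absorbing the high-multiplicity terms. The variance is handled by the same counting applied to pairs of walks arising in $\E\bigl[(n^{1+(k+l)/2}\tau_n(k,l))^2\bigr]$: the disconnected pairs reproduce $(\E\tau_n(k,l))^2$ and cancel in the variance, while pairs whose walks share an edge contribute a total that is $o(n^{2+(k+l)})$, so $\V\tau_n(k,l)\to0$.

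The main obstacle is precisely the moment estimate of the previous paragraph. Its leading-order part is short, because the independence of $m_{nij}$ and $m_{nji}$ collapses the usual Catalan-type enumeration to a single surviving walk shape; the delicate (if routine) part is showing that the subleading walk contributions to both the mean and the variance are negligible under only a fourth-moment assumption, and this is exactly where the truncation of Lemma~\ref{lemma:truncation} is used. The power-series expansion on $\Omega_n$, the uniform tail bound, and the summation of the geometric series are all elementary.
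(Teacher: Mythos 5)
Your argument is correct, but it follows a genuinely different route from the paper. The paper never expands the resolvent in a power series: it works with $H(z,w)=[(w-X_n)^\T(z-X_n)]^{-1}$, writes $\frac1n\tr H$ as a sum of diagonal entries via the partitioned-inverse (leave-one-column-out) formula, uses the quadratic-form concentration estimates of Lemma \ref{lemma:moments} together with low-rank resolvent perturbation bounds to show each such entry is close to $\bigl(1+\frac1n\tr H\bigr)/(zw)$, and then solves the resulting self-consistent equation to get $1/(zw-1)$; everything is done on the event $\Omega_n$ using the deterministic bound $\mathcal{K}$ of Proposition \ref{prop:contra}. You instead exploit that on $\Omega_n$ one has $\|M_n/\sqrt n\|\le\kappa<2+\epsilon\le|z|,|w|$, expand both resolvents in Neumann series with a uniform geometric tail, and reduce the lemma to the mixed-moment convergence $\frac1n\tr\bigl(A^k(A^\T)^l\bigr)\to\delta_{kl}$, which you prove by the trace/walk-counting method; the observation that independence of $m_{nij}$ and $m_{nji}$ forces each tree edge to be used once in the ``forward'' block and once in the ``backward'' block (hence $k=l$ and a single retraced-path shape, giving limit $1$ rather than a Catalan number) is exactly right, and the geometric series then sums to $1/(zw-1)$. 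Your interchange of the $N$- and $n$-limits via the uniform tail bound $\rho_N$ on $\Omega_n$ and the three-epsilon argument is sound, and the transfer to $R_{n,l}$, $\Omega_{n,l}$ works as you say. What your approach buys is a conceptually transparent identification of the limit (the $*$-moments of a circular element) without any leave-one-out or self-consistency machinery, and it only needs the crude truncation $|m_{nij}|\le\epsilon_n\sqrt n$ coming from condition {\bf C0}(iii), not the refined row/column truncation of Lemma \ref{lemma:truncation}; what it costs is that the real technical content migrates into the subleading-walk and variance estimates ($o(n^{1+(k+l)/2})$ and $o(n^{2+k+l})$ respectively), which you assert as routine rather than carry out --- they are indeed standard under a fourth-moment assumption plus truncation, but they are the analogue of the work the paper does with Lemma \ref{lemma:moments}, so a complete write-up would still have to include them.
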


\begin{proof}
We will prove the result for $R_n$ on the event $\Omega_n$.  The proof for $R_{n,l}$ follows the same argument. 

We define $X_n := \frac{1}{\sqrt{n}} M_n$ and recall that
$$ \Omega_n = \left\{ X_n : \|X_n \| \leq \kappa \right\}. $$
By Lemma \ref{lemma:truncation}, the matrix $X_n$ and the truncated version $\tilde{X}_n$ coincide with probability going to one.  Thus it suffices to prove Lemma \ref{lemma:variance} for the truncated version.  To avoid unnecessary notation, we let $X_n$ denote the truncated matrix and $R_n(z)$ denote the resolvent of the truncated matrix.  

Let $D(z) = (z-X_n)$ and $H(z,w) = [D(w)^\T D(z)]^{-1}$.  We will let $d_k(z)$ denote the $k$-th column of $D(z)$ and $D_k(z)$ denote the matrix $D(z)$ with the $k$-th column removed.  Then we have that
\begin{align*}
	\frac{1}{n} \tr H(z,w)\oindicator{\Omega_n} &= \frac{1}{n} \sum_{k=1}^{n-1} \frac{1}{d_k(w)^\T d_k(z) - d_k(w)^\T D_k(z) H_k(z,w)D_k(w)^\T d_k(z)}\oindicator{\Omega_n} \\
	& = \frac{1}{n} \sum_{k=1}^{n-1} \frac{1}{d_k(w)^\T d_k(z) - d_k(w)^\T D_k(z) H_k(z,w)D_k(w)^\T d_k(z) \oindicator{\Omega_n}}\oindicator{\Omega_n} \\
 	& = \frac{1}{n} \sum_{k=1}^{n-1} \beta_k.
\end{align*}
where $H_k(z,w) = [D_k(w)^\T D_k(z)]^{-1}$ and $\beta_k^{-1} = d_k(w)^\T d_k(z) - d_k(w)^\T D_k(z) H_k(z,w)D_k(w)^\T d_k(z)\oindicator{\Omega_n}$.  

Let $r_k$ denote the $k$-th column of $X_n$ and let $X_n^{(k)}$ denote the matrix $X_n$ with the $k$-th column removed.  On the event $\Omega_n$, we have that $\|X_n^{(k)}\| \leq \kappa$ and hence 
\begin{equation*}
	\|H(z,w)\| \leq \mathcal{K}^2 \qquad \text{and } \sup_{k} \|H_k(z,w)\| \leq \mathcal{K}^2.
\end{equation*}

We also note that
\begin{equation*}
	d_k(w)^\T d_k(z) = zw - (z+w){X_n}_{kk} + r_k^\T r_k.
\end{equation*}

Recalling that the entries of $X_n$ (and hence $r_k$) have variance $\frac{1}{n}$, we have that
\begin{align*}
	\Prob & \left( \sup_{1 \leq k \leq n-1} |d_k(w)^\T d_k(z) - (zw+1)| > \epsilon \right) \\
	& \qquad \leq \sum_{k=1}^{n-1} \left[\Prob\left( |{X_n}_{kk}| > \frac{\epsilon}{2|z|+2|w|} \right) + \Prob\left( |r_k^\T r_k - 1| > \frac{\epsilon}{2} \right) \right] \\
	& \qquad \leq \sum_{k=1}^{n-1} \left[ \frac{(2|z|+2|w|)^4 m_4}{n^2 \epsilon^4} + \frac{16}{\epsilon^4} \E|r_k^\T r_k - 1|^4 \right] = O(\epsilon_n)
\end{align*}
by Lemma \ref{lemma:moments}.  

Define the event
\begin{equation*}
	\Omega_{n}^{(k)} = \left\{ \|X_n \| : \|X_n^{(k)} \| \leq \kappa \right\}.
\end{equation*}
Below we will make use of the fact that $\oindicator{\Omega_n} \leq \oindicator{\Omega_{n}^{(k)}}$.

We note that
\begin{align*}
	& d_k(w)^\T D_k(z) H_k(z,w)D_k(w)^\T d_k(z) \\ 
	& \qquad = r_k^\T D_k(z) H_k(z,w)D_k(w)^\T r_k - we_k^\T D_k(z) H_k(z,w)D_k(w)^\T r_k \\
	& \qquad\qquad - z r_k^\T D_k(z) H_k(z,w)D_k(w)^\T e_k + zw e_k^\T D_k(z) H_k(z,w)D_k(w)^\T e_k.
\end{align*}

Since $\|D_k(z) H_k(z,w)D_k(w)^\T d_k(z)\|\oindicator{\Omega_n} \leq \kappa^2 \mathcal{K}^2$, by Lemma \ref{lemma:moments} and Markov's inequality, we have that
\begin{align*}
	\Prob &\left( \sup_{1 \leq k \leq n-1} | r_k^\T D_k(z) H_k(z,w)D_k(w)^\T r_k - \frac{1}{n} \Tr (D_k(z) H_k(z,w)D_k(w)^\T) |\oindicator{\Omega_n} > \epsilon \right) \\
	&  \leq \Prob \left( \sup_{1 \leq k \leq n-1} | r_k^\T D_k(z) H_k(z,w)D_k(w)^\T r_k - \frac{1}{n} \Tr (D_k(z) H_k(z,w)D_k(w)^\T) |\oindicator{\Omega_{n}^{(k)}} > \epsilon \right) \\ & = O(\epsilon_n), \\
	\Prob &\left( \sup_{1 \leq k \leq n-1} | e_k^\T D_k(z) H_k(z,w)D_k(w)^\T r_k |\oindicator{\Omega_n} > \epsilon \right) = O\left(\frac{1}{n} \right), \\
	\Prob &\left( \sup_{1 \leq k \leq n-1} | r_k^\T D_k(z) H_k(z,w)D_k(w)^\T e_k |\oindicator{\Omega_n} > \epsilon \right) = O\left(\frac{1}{n} \right).
\end{align*}
Furthermore, 
\begin{align*}
	\frac{1}{n} \Tr (D_k(z) H_k(z,w)D_k(w)^\T) = \frac{n-2}{n} = 1 + O\left(\frac{1}{n} \right).
\end{align*}

Therefore, the above estimates imply that
\begin{equation*}
	\sup_{1 \leq k \leq n-1} | \beta_k - zw + zw e_k^\T D_k(z) H_k(z,w)D_k(w)^\T e_k|\oindicator{\Omega_n} \longrightarrow 0
\end{equation*}
in probability as $n \rightarrow \infty$.  

Let $y_k$ denote the $k$-th row of $X_n$ with the $k$-th entry removed.  Then 
\begin{equation*}
	e_k^\T D_k(z) H_k(z,w)D_k(w)^\T e_k = y_k^\T H_k(z,w) y_k.
\end{equation*}
Let $\hat{H}_k(z,w) = [\hat{D}_k(w)^\T \hat{D}_k(z)]^{-1}$ where $\hat{D}_k(z)$ is the matrix $D_k(z)$ with the $k$-th row removed.  Since $H_k(z,w)^{-1} = \hat{H}_k(z,w)^{-1} + y_k y_k^\T$, by the resolvent identity
\begin{align*}
	y_k^\T H_k(z,w) y_k = 1 - \frac{1}{1 + y_k^\T \hat{H}_k(z,w) y_k }.
\end{align*}

Since for any $1 \leq k \leq n-1$, 
\begin{align*}
	\left|\frac{1}{\beta_k}\right| = |H(z,w)_{kk}|\oindicator{\Omega_n} \leq \mathcal{K}^2, \\
	\left| y_k^\T \hat{H}_k(z,w) y_k \right|\oindicator{\Omega_n}  \leq \kappa^2 \mathcal{K}^2,
\end{align*}
it follows that
\begin{align*}
	\sup_{1 \leq k \leq n-1} \left| \frac{1}{\beta_k} - \frac{1+ y_k^\T \hat{H}_k(z,w) y_k}{zw} \right|\oindicator{\Omega_n} \longrightarrow 0
\end{align*}
in probability as $n \rightarrow \infty$. 

Now define the event
\begin{equation*}
	\tilde{\Omega}_{n}^{(k)} = \left\{ X_n : \| \tilde{X}_n^{(k)} \| \leq \kappa \right\}
\end{equation*} 
where $\tilde{X}_n^{(k)}$ is the matrix $X_n$ with the $k$-th column and $k$-th row removed.  Again we will make use of the fact that $\oindicator{\Omega_n} \leq \oindicator{\tilde{\Omega}_{n}^{(k)}}$.  In particular, using Markov's inequality and Lemma \ref{lemma:moments}, it follows that
\begin{align*}
	\sup_{1 \leq k \leq n-1} &\left| y_k^\T \hat{H}_k(z,w) y_k - \frac{1}{n} \tr( \hat{H}_k(z,w)) \right|\oindicator{\Omega_n} \\ 
	& \leq \sup_{1 \leq k \leq n-1} \left| y_k^\T \hat{H}_k(z,w) y_k - \frac{1}{n} \tr( \hat{H}_k(z,w)) \right|\oindicator{\tilde{\Omega}_{n}^{(k)}} \longrightarrow 0
\end{align*}
in probability as $n \rightarrow \infty$.  

Finally, let $S_k(z,w) = [(w-\hat{X}_k)^\T(z-\hat{X}_k)]^{-1}$ where $\hat{X}_k$ is the matrix $X_n$ where the $k$-th row and $k$-th column are replaced by zeros.  In particular, $S_k(z,w)$ is an $n \times n$ matrix.  It follows that 
\begin{equation*}
	\frac{1}{n} \left| \tr S_k(z,w) - \tr \hat{H}_k(z,w)\right| = O\left( \frac{1}{n} \right).
\end{equation*}
Then by the resolvent identity, for $1 \leq k \leq n-1$, 
\begin{align*}
	& \left| \frac{1}{n} \tr S_k(z,w) - \frac{1}{n} H(z,w) \right|\oindicator{\Omega_n} \\
	& \qquad= \frac{1}{n} \tr\left[ H(z,w) \left( (w-X_n)^\T(z-X_n) - (w-\hat{X}_k)^\T (z-\hat{X}_k) \right) S_k(z,w) \right|\oindicator{\Omega_n} \\
	& \qquad \leq \frac{4}{n} \| H(z,w) \left( (w-X_n)^\T (z-X_n) - (w-\hat{X}_k)^\T (z-\hat{X}_k) \right) S_k(z,w) \| \oindicator{\Omega_n} \\
	& \qquad \leq \frac{8}{n} \mathcal{K}^4 (|z|+\kappa)(|w|+\kappa) 
\end{align*}
since $(w-X_n)^\T (z-X_n) - (w-\hat{X}_k)^\T (z-\hat{X}_k)$ is at most rank $4$.  Therefore, we have that
\begin{equation*}
	\sup_{1 \leq k \leq n-1} \left| \frac{1}{n} \tr \hat{H}_k(z,w) - \frac{1}{n} H(z,w) \right|\oindicator{\Omega_n} = O\left( \frac{1}{n} \right).
\end{equation*}

Therefore, combining the above, yields that
\begin{align*}
	\sup_{1 \leq k \leq n-1} \left| \frac{1}{\beta_k} - \frac{1 + \frac{1}{n} \tr H(z,w)}{zw} \right|\oindicator{\Omega_n} \longrightarrow 0
\end{align*}
in probability as $n \rightarrow \infty$ and hence
\begin{align*}
	\frac{1}{n}\tr H(z,w)\oindicator{\Omega_n} - \frac{1 + \frac{1}{n} \tr H(z,w)}{zw}\oindicator{\Omega_n} \longrightarrow 0
\end{align*}
in probability as $n \rightarrow \infty$.  The proof of the lemma is complete.  
	
\end{proof}

By Lemma \ref{lemma:variance}, we have that
\begin{align*}
	&\frac{1}{n} \tr \left( \frac{R_{n,l}(z)}{2z^2} + \frac{\overline{R_{n,l}(z)}}{2\overline{z}^2} \right) \left( \frac{R_{n,l}(w)}{2w^2} + \frac{\overline{R_{n,l}(w)}}{2\overline{w}^2} \right)^\T \oindicator{\Omega_{n,l}} \\
	& \qquad \longrightarrow \frac{1}{4} \left[ \frac{1}{z^2 w^2 (zw-1)} + \frac{1}{z^2 \overline{w}^2 (z\overline{w}-1)} + \frac{1}{\overline{z}^2 w^2 (\overline{z}w-1)} + \frac{1}{\overline{z}^2 \overline{w}^2 (\overline{z}\overline{w}-1)}  \right], \\
	&\frac{1}{n} \tr \left( \frac{R_{n,l}(z)}{2\sqrt{-1}z^2} - \frac{\overline{R_{n,l}(z)}}{2\sqrt{-1}\overline{z}^2} \right) \left( \frac{R_{n,l}(w)}{2\sqrt{-1}w^2} - \frac{\overline{R_{n,l}(w)}}{2\sqrt{-1}\overline{w}^2} \right)^\T \oindicator{\Omega_{n,l}} \\
	&\qquad \longrightarrow -\frac{1}{4} \left[  \frac{1}{z^2 w^2 (zw-1)} - \frac{1}{z^2 \overline{w}^2 (z\overline{w}-1)} - \frac{1}{\overline{z}^2 w^2 (\overline{z}w-1)} + \frac{1}{\overline{z}^2 \overline{w}^2 (\overline{z}\overline{w}-1)}  \right], \\
	& \frac{1}{n} \tr \left( \frac{R_{n,l}(z)}{2z^2} + \frac{\overline{R_{n,l}(z)}}{2\overline{z}^2} \right) \left( \frac{R_{n,l}(w)}{2\sqrt{-1}w^2} - \frac{\overline{R_{n,l}(w)}}{2\sqrt{-1}\overline{w}^2} \right)^\T \oindicator{\Omega_{n,l}} \\
	&\qquad \longrightarrow \frac{1}{4 \sqrt{-1}} \left[  \frac{1}{z^2 w^2 (zw-1)} - \frac{1}{z^2 \overline{w}^2 (z\overline{w}-1)} + \frac{1}{\overline{z}^2 w^2 (\overline{z}w-1)} - \frac{1}{\overline{z}^2 \overline{w}^2 (\overline{z}\overline{w}-1)}  \right],
\end{align*}
in probability as $n \rightarrow \infty$.  

Fix $r \geq 1$, pick $z_1, \ldots, z_r, w_1, \ldots, w_r \in D$, and let $\alpha_k, \beta_k \in \R$ for $1 \leq k \leq r$.  Then define the matrix
$$ Q := \sum_{k=1}^r \left[ \alpha_k \left( \frac{R_{n,l}(z_k)}{2z_k^2} + \frac{R_{n,l}(\bar{z_k})}{2\bar{z_k}^2} \right) + \beta_k \left( \frac{R_{n,l}(w_k)}{2 \sqrt{-1}w_k^2} - \frac{R_{n,l}(w_k)}{2 \sqrt{-1}\bar{w_k}^2} \right) \right] \oindicator{\Omega_{n,l}}. $$

Clearly $\tilde{\psi}$, $\tilde{\phi}$, and $Q$ are independent.  By applying Lemma \ref{lemma:clt} from Appendix \ref{sec:clt} to the term
$$ \frac{1}{\sqrt{n}} \tilde{\psi} Q \tilde{\phi} $$
and using the calculations above from Lemma \ref{lemma:variance}, it follows that the finite dimensional distributions of $S_n(z)$ converge to $S(z)$.

\section{Tightness} \label{sec:tight}

We now extend the finite dimensional convergence of $S_n(z)$ to weak convergence in the space of continuous functions by verifying that $\{S_n\}_{n \geq 1}$ is tight in the space of continuous functions on $D$ (see for example \cite{B-CPM}).  Since $S_n(z)$ is expressed as a linear combination of $Y_n(z)_{ij}$, $1 \leq i,j \leq l$, it suffices to show that $\{\|Y_n\|\}_{n \geq 1}$ is tight in the space of continuous functions on $D$. 

We remind the reader that
$$ Y_n(z) = \sqrt{n} \left[ \left( z - \frac{1}{\sqrt{n}} M_n \right)^{-1}_{ij} - \frac{1}{z} \delta_{ij} - \frac{m_{nij}}{z^2 \sqrt{n}} \right]_{i,j=1}^l. $$

Since $D$ is compact, there exists some constant $C(D) > 0$ such that
\begin{equation} \label{eq:4ez}
	2 + \epsilon \leq |z| \leq C(D) \text{ for all } z \in D. 
\end{equation}

By Proposition \ref{prop:contra},
$$ \Prob \left( \sup_{z \in D} \|Y_n(z) \| \oindicator{\Omega_n}^C > \eta \right) \leq \Prob \left( \sup_{z \in D} \| R_n(z) \| > \mathcal{K} \right) + \Prob(\oindicator{\Omega_n^C}) \longrightarrow 0 $$
as $n \rightarrow \infty$.  The problem then reduces to showing that $\{\|Y_n\| \oindicator{\Omega_n} \}_{n \geq 1}$ is tight in the space of continuous functions on $D$.  We will verify the Arzela-Ascoli criteria (\cite{B-CPM}) by checking that there exists constants $\alpha, \beta, C>0$ such that
$$ \E| \|Y_n(z)\| \oindicator{\Omega_n} - \|Y_n(w)\| \oindicator{\Omega_n}|^\alpha \leq C |z-w|^{1+\beta} $$
for all $z,w \in D$ and all $n \geq 1$. 

\begin{lemma}
There exists a constant $C>0$ (depending on the set $D$) such that
$$ \E| \|Y_n(z)\| \oindicator{\Omega_n} - \|Y_n(w)\| \oindicator{\Omega_n}|^2 \leq C |z-w|^2 $$
for all $z,w \in D$ and all $n \geq 1$.  
\end{lemma}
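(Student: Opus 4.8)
The plan is to pass from norms of matrices to the matrices themselves. Since $\oindicator{\Omega_n}$ is a $\{0,1\}$-valued scalar and $B\mapsto\|B\|$ is $1$-Lipschitz, the reverse triangle inequality gives
$$ \bigl|\,\|Y_n(z)\|\oindicator{\Omega_n} - \|Y_n(w)\|\oindicator{\Omega_n}\,\bigr| \le \|Y_n(z) - Y_n(w)\|\,\oindicator{\Omega_n}, $$
so it is enough to prove $\E\bigl[\|Y_n(z)-Y_n(w)\|^2\oindicator{\Omega_n}\bigr]\le C|z-w|^2$. On $\Omega_n$ we have $\tfrac{1}{\sqrt n}\|M_{n,l}\|\le\tfrac{1}{\sqrt n}\|M_n\|\le\kappa$ (so $\Omega_n\subset\Omega_{n,l}$) and both $z-\tfrac{1}{\sqrt n}M_n$ and $z-\tfrac{1}{\sqrt n}M_{n,l}$ are invertible with $\|R_n(z)\|,\|R_{n,l}(z)\|\le\mathcal K$ by Proposition \ref{prop:contra}. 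The Schur complement formula for the block matrix $z-\tfrac{1}{\sqrt n}M_n$ then yields, on $\Omega_n$,
$$ \Bigl[\bigl(z-\tfrac{1}{\sqrt n}M_n\bigr)^{-1}_{ij}\Bigr]_{i,j=1}^l = \bigl(zI_l - A(z)\bigr)^{-1}, \qquad A(z) := \tfrac{1}{\sqrt n}X + \tfrac{1}{n}\psi R_{n,l}(z)\phi . $$
Using the expansion $(zI_l - A)^{-1} = \tfrac1z I_l + \tfrac{1}{z^2}A + \tfrac{1}{z^3}(I_l - \tfrac1z A)^{-1}A^2$ together with $X_{ij}=m_{nij}$ for $i,j\le l$, the first two terms of $\sqrt n$ times this identity cancel against $\tfrac1z\delta_{ij}$ and $\tfrac{1}{z^2\sqrt n}m_{nij}$ in \eqref{def:Y}, so that on $\Omega_n$
$$ Y_n(z) = \frac{1}{z^2\sqrt n}\,\psi R_{n,l}(z)\phi \;+\; E_n(z), \qquad E_n(z) := \frac{\sqrt n}{z^3}\bigl(I_l - \tfrac1z A(z)\bigr)^{-1}A(z)^2 . $$

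For the leading term, the resolvent identity $R_{n,l}(z)-R_{n,l}(w)=(w-z)R_{n,l}(z)R_{n,l}(w)$ and \eqref{eq:4ez} show that $G:=\tfrac{1}{z^2}R_{n,l}(z)-\tfrac{1}{w^2}R_{n,l}(w)$ satisfies $\|G\|\oindicator{\Omega_{n,l}}\le C(D)|z-w|$. The $(i,j)$ entry of $\tfrac{1}{\sqrt n}\psi G\phi$ equals $\tfrac{1}{\sqrt n}\mathbf u_i^\T G\mathbf v_j$, where $\mathbf u_i$ is the $i$-th row of $\psi$ and $\mathbf v_j$ the $j$-th column of $\phi$; since $\psi$, $\phi$ and $M_{n,l}$ are built from pairwise disjoint collections of entries of $M_n$, the vectors $\mathbf u_i,\mathbf v_j$ and the matrix $G$ are independent and $\oindicator{\Omega_{n,l}}$ depends only on $M_{n,l}$. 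Conditioning on $M_{n,l}$ and applying Lemma \ref{lemma:trace} (legitimate because $\oindicator{\Omega_n}\le\oindicator{\Omega_{n,l}}$ and $\Omega_{n,l}$ is $M_{n,l}$-measurable) together with $\tfrac{1}{n}\Tr(GG^\T)\oindicator{\Omega_{n,l}}\le\|G\|^2\oindicator{\Omega_{n,l}}$ gives $\E\bigl[\tfrac{1}{n}|\mathbf u_i^\T G\mathbf v_j|^2\oindicator{\Omega_n}\bigr]\le C(D)^2|z-w|^2$; summing the $l^2$ entries bounds $\E\bigl[\tfrac{1}{n}\|\psi G\phi\|^2\oindicator{\Omega_n}\bigr]$ by $C|z-w|^2$.

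For the remainder I would first record, on $\Omega_n$, the deterministic bounds $\|A(z)\|\oindicator{\Omega_n}\le\kappa+\kappa^2\mathcal K=:c_0$ (from $\|X\|,\|\psi\|,\|\phi\|\le\|M_n\|$ and $\|R_{n,l}(z)\|\le\mathcal K$) and $\|(I_l-\tfrac1z A(z))^{-1}\|\oindicator{\Omega_n} = |z|\,\|[(z-\tfrac1{\sqrt n}M_n)^{-1}_{ij}]_{i,j=1}^l\|\oindicator{\Omega_n}\le C(D)\mathcal K$, as well as the $L^2$ bounds $\E[\|A(z)\|^2\oindicator{\Omega_n}]\le C/n$ and $\E[\|A(z)-A(w)\|^2\oindicator{\Omega_n}]\le C|z-w|^2/n$; the latter two follow by writing each entry of $A(z)$, resp.\ of $A(z)-A(w)=\tfrac{w-z}{n}\psi R_{n,l}(z)R_{n,l}(w)\phi$, as a bilinear form $\tfrac1n\mathbf u_i^\T B\mathbf v_j$ with $B$ independent of $\mathbf u_i,\mathbf v_j$ and $\|B\|\oindicator{\Omega_{n,l}}$ bounded, and invoking Lemma \ref{lemma:trace} (and $\E m_{nij}^2=1$). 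Telescoping,
\begin{align*}
E_n(z)-E_n(w) &= \sqrt n\Bigl(\tfrac{1}{z^3}-\tfrac{1}{w^3}\Bigr)\bigl(I_l-\tfrac1z A(z)\bigr)^{-1}A(z)^2 \\
&\quad + \frac{\sqrt n}{w^3}\bigl(I_l-\tfrac1z A(z)\bigr)^{-1}\bigl(\tfrac1z A(z)-\tfrac1w A(w)\bigr)\bigl(I_l-\tfrac1w A(w)\bigr)^{-1}A(z)^2 \\
&\quad + \frac{\sqrt n}{w^3}\bigl(I_l-\tfrac1w A(w)\bigr)^{-1}\bigl(A(z)^2-A(w)^2\bigr),
\end{align*}
and expanding $\tfrac1z A(z)-\tfrac1w A(w)=\tfrac1z(A(z)-A(w))+(\tfrac1z-\tfrac1w)A(w)$ and $A(z)^2-A(w)^2=A(z)(A(z)-A(w))+(A(z)-A(w))A(w)$, each resulting term carries an explicit factor $|z-w|$ (from $|\tfrac{1}{z^3}-\tfrac{1}{w^3}|$, $|\tfrac1z-\tfrac1w|$, or $\|A(z)-A(w)\|$) and a product of at most four factors $\|A(\cdot)\|\oindicator{\Omega_n}$, or one factor $\|A(z)-A(w)\|\oindicator{\Omega_n}$. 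Using $c_0$ and $C(D)\mathcal K$ to absorb all but one quadratic factor and then $\E[\|A(z)\|^2\oindicator{\Omega_n}]\le C/n$ or $\E[\|A(z)-A(w)\|^2\oindicator{\Omega_n}]\le C|z-w|^2/n$ to cancel the $(\sqrt n)^2=n$, every term contributes at most $C|z-w|^2$ to $\E[\|E_n(z)-E_n(w)\|^2\oindicator{\Omega_n}]$. Combining this with the leading-term bound proves the claim.

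The main obstacle is precisely the remainder estimate. For convergence of the finite-dimensional distributions it suffices that $E_n(z)=o(1)$ in probability, but here one needs Lipschitz control of $E_n$ in $L^2$, uniformly in $n$; the crucial ingredients are the exact decomposition $Y_n=\tfrac{1}{z^2\sqrt n}\psi R_{n,l}\phi+E_n$ on $\Omega_n$ (so that $Y_n(z)-Y_n(w)$, despite the prefactor $\sqrt n$, is genuinely of order $|z-w|$), the fact that the extra powers of $\|A\|$ generated by telescoping are controlled deterministically on $\Omega_n$, and the careful bookkeeping of $\Omega_n\subset\Omega_{n,l}$ that makes conditioning on $M_{n,l}$ — and hence the use of Lemma \ref{lemma:trace} — legitimate.
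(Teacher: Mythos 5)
Your proof is correct, and it runs on the same toolkit as the paper's argument — the reduction $|\|Y_n(z)\|\oindicator{\Omega_n}-\|Y_n(w)\|\oindicator{\Omega_n}|\le\|Y_n(z)-Y_n(w)\|\oindicator{\Omega_n}$, the Schur complement representation of the upper-left $l\times l$ block, resolvent identities to extract the factor $|z-w|$, Lemma \ref{lemma:trace} for the bilinear forms, and the inclusion $\Omega_n\subset\Omega_{n,l}$ with Proposition \ref{prop:contra} — but the decomposition you feed into it is genuinely different. The paper does not seek any cancellation: it writes $Y_n(z)=\sqrt n\,Q_n(z)-X/z^2$ with $Q_n(z)=[R_n(z)_{ij}-\tfrac1z\delta_{ij}]_{i,j=1}^l$, disposes of the $X/z^2$ increment by the crude bound $\E\|X\|^2\le l^2$, and uses only the first-order identity expressing $\sqrt n\,Q_n(z)$ as $\tfrac1z T_n(z)\bigl(X+\tfrac1{\sqrt n}\psi R_{n,l}(z)\phi\bigr)$ (up to sign), with $T_n(z)=\bigl(z-\tfrac1{\sqrt n}X-\tfrac1n\psi R_{n,l}(z)\phi\bigr)^{-1}$; the increment in $z$ is then handled by the deterministic Lipschitz bound on $T_n$ on $\Omega_n$ together with the two trace-lemma estimates on $\tfrac1n\E\|\psi R_{n,l}(z)\phi\|^2$ and $\tfrac1n\E\|\psi[R_{n,l}(z)-R_{n,l}(w)]\phi\|^2$. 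You instead expand to second order so that the subtracted terms in \eqref{def:Y} cancel exactly, isolating $Y_n(z)=\tfrac1{z^2\sqrt n}\psi R_{n,l}(z)\phi+E_n(z)$ with $E_n(z)=\tfrac{\sqrt n}{z^3}\bigl(I_l-\tfrac1zA(z)\bigr)^{-1}A(z)^2$. That buys a cleaner structural statement — the same leading bilinear form that drives the CLT in Section \ref{sec:finite}, plus an $L^2$-remainder of size $n^{-1/2}$ — but it costs the extra estimates $\E[\|A(z)\|^2\oindicator{\Omega_n}]=O(1/n)$ and $\E[\|A(z)-A(w)\|^2\oindicator{\Omega_n}]=O(|z-w|^2/n)$ and the telescoping bookkeeping for $E_n(z)-E_n(w)$, all of which you carry out correctly: the deterministic bounds $\|A(z)\|\oindicator{\Omega_n}\le\kappa+\kappa^2\mathcal K$ and $\|(I_l-\tfrac1zA(z))^{-1}\|\oindicator{\Omega_n}\le C(D)\mathcal K$ are exactly what is needed to absorb the spare powers of $A$ before a single $L^2$ factor cancels the $(\sqrt n)^2$. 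In short, the paper's one-step route is shorter because only Lipschitz continuity in $z$ is needed, so the $O(1)$-sized pieces $X$ and $\tfrac1{\sqrt n}\psi R_{n,l}\phi$ may simply ride along uncancelled, while your route is a bit longer but makes explicit why the $\sqrt n$ normalization is harmless for the increments.
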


\begin{proof}
Define the $l \times l$ matrix 
$$ Q_n(z) := \left[ R_n(z)_{ij} - \frac{1}{z} \delta_{ij} \right]_{i,j=1}^l. $$
By the triangle inequality, 
\begin{align}
	|\|Y_n(z)\| \oindicator{\Omega_n} - \|Y_n(w)\| \oindicator{\Omega_n}| &\leq \| Y_n(z) - Y_n(w) \| \oindicator{\Omega_n} \nonumber \\
	& \leq \sqrt{n} \left\| Q_n(z) - Q_n(w) \right\|\oindicator{\Omega_n} + \left\| \frac{X}{z^2} - \frac{X}{w^2} \right\|. \label{eq:xz2}
\end{align}

We start by bounding the second term on the right-hand side of \eqref{eq:xz2}. For $z,w \in D$, 
$$ \left| \frac{\|X\|}{z^2} - \frac{\|X\|}{w^2} \right|^2 \leq \frac{\|X\|^2}{|z|^4|w|^4} |w^2 - z^2|^2 \leq \frac{\|X\|^2}{(2 + \epsilon)^8} 4C(D)^2 |w-z|^2. $$
We then apply the naive bound
\begin{equation} \label{eq:xet}
	\E\|X\|^2 \leq \E \tr (XX^\ast) = \sum_{i,j=1}^l \E|m_{nij}|^2 = l^2
\end{equation}
to obtain 
$$ \E \left| \frac{\|X\|}{z^2} - \frac{\|X\|}{w^2} \right|^2 \leq C |z-w|^2 $$
for some constant $C>0$ (which depends on the set $D$).

In order to deal with the first term on the right-hand side of \eqref{eq:xz2}, we decompose
\begin{align*}
	\sqrt{n}  \left[ R_n(z)_{ij} - \frac{1}{z} \delta_{ij} \right]_{i,j=1}^l &= \sqrt{n} \left( \left( z - \frac{1}{\sqrt{n}}X - \frac{1}{n} \psi R_{n,l}(z) \phi \right)^{-1} - \frac{1}{z}I_l \right) \\
	&= -\frac{1}{z} \left( z - \frac{1}{\sqrt{n}}X - \frac{1}{n} \psi R_{n,l}(z) \phi \right)^{-1} \left( X + \frac{1}{\sqrt{n}}\psi R_{n,l}(z) \phi \right) \\
	&= -\frac{1}{z} T_n(z) \left( X + \frac{1}{\sqrt{n}}\psi R_{n,l}(z) \phi \right)
\end{align*}
by the resolvent identity, where
$$ T_n(z) =  \left( z - \frac{1}{\sqrt{n}}X - \frac{1}{n} \psi R_{n,l}(z) \phi \right)^{-1}.$$
We note that all the relevant inverses above exist on the event $\Omega_n$.  Then
\begin{align*}
	\sqrt{n} \| Q_n(z) - Q_n(w) \| \leq \left\| \frac{X}{w}T_n(w) - \frac{X}{z}T_n(z) \right\| + \left\| \frac{T_n(w)}{w \sqrt{n}} \psi R_{n,l}(w) \phi - \frac{T_n(z)}{z\sqrt{n}} \psi R_{n,l} \phi \right \|
\end{align*}

On the event $\Omega_n$, we have that
\begin{align*}
	\left\| \frac{X}{w}T_n(w) - \frac{X}{z}T_n(z) \right\| &\leq \|X \| \left( \|T_n(w) \| \left| \frac{1}{w} - \frac{1}{z} \right| + \frac{1}{|z|} \|T_n(w) - T_n(z) \| \right)  \\
	& \leq \|X\| \left( \frac{\mathcal{K}}{(2+\epsilon)^2}|z-w| + \|T_n(w) - T_n(z) \| \right)
\end{align*}
and by the resolvent identity 
\begin{equation} \label{eq:tnw}
	\|T_n(w) - T_n(z) \| \leq \mathcal{K}^2 |z-w|. 
\end{equation}
Thus by \eqref{eq:xet}, we obtain that
$$ \E \left\| \frac{X}{w}T_n(w) - \frac{X}{z}T_n(z) \right\|^2 \oindicator{\Omega_n} \leq C |z-w|^2. $$

Similarly, we now bound
$$ \left\| \frac{T_n(w)}{w \sqrt{n}} \psi R_{n,l}(w) \phi - \frac{T_n(z)}{z\sqrt{n}} \psi R_{n,l} \phi \right \|. $$
Using the triangle inequality and \eqref{eq:tnw}, it suffices to bound 
\begin{equation} \label{eq:epr}
	\frac{1}{n} \E \| \psi R_{n,l}(z) \phi \|^2\oindicator{\Omega_n} 
\end{equation}
and
\begin{equation} \label{eq:epr2}
	\frac{1}{n} \E \left\| \psi \left[ R_{n,l}(z) - R_{n,l}(w) \right] \phi \right\|^2\oindicator{\Omega_n}.
\end{equation}

For \eqref{eq:epr}, we note that
\begin{align*}
	\frac{1}{n} \E \| \psi R_{n,l}(z) \phi \|^2\oindicator{\Omega_n} &\leq \frac{1}{n} \E \| \psi R_{n,l}(z) \phi \|^2\oindicator{\Omega_{n,l}} \\
	& \leq \frac{1}{n} \sum_{i,j=1}^l \E \left| (\psi R_{n,l}(z) \phi)_{ij} \right|^2 \leq l^2 \mathcal{K}^2
\end{align*}
by Lemma \ref{lemma:trace}.  

For \eqref{eq:epr2}, we apply the resolvent identity and obtain
\begin{align*}
	\frac{1}{n} \E \left\| \psi \left[ R_{n,l}(z) - R_{n,l}(w) \right] \phi \right\|^2\oindicator{\Omega_n} &\leq \frac{1}{n}|z-w|^2 \E \left\| \psi R_{n,l}(z) R_{n,l}(w) \phi \right\|^2\oindicator{\Omega_n} \\
	& \leq \frac{1}{n}|z-w|^2 \E \left\| \psi R_{n,l}(z) R_{n,l}(w) \phi \right\|^2\oindicator{\Omega_{n,l}}.
\end{align*}
Since $\psi R_{n,l}(z) R_{n,l}(w) \phi$ is an $l \times l$ matrix, we again apply Lemma \ref{lemma:trace} to each entry to bound the norm and obtain 
$$ \frac{1}{n} \E \left\| \psi R_{n,l}(z) R_{n,l}(w) \phi \right\|^2\oindicator{\Omega_{n,l}} \leq l^2 \mathcal{K}^4. $$
Therefore 
$$ \E\left\| \frac{T_n(w)}{w \sqrt{n}} \psi R_{n,l}(w) \phi - \frac{T_n(z)}{z\sqrt{n}} \psi R_{n,l} \phi \right \|^2 \oindicator{\Omega_n} \leq C |z-w|^2 $$
and the proof of the lemma is complete.
\end{proof}

\subsection*{Acknowledgment}
The author would like to thank an anonymous referee for the valuable suggestions and remarks.

\appendix

\section{Central Limit Theorem} \label{sec:clt}

Lemma \ref{lemma:clt} below is a Corollary of Theorem A.5 from \cite{ORS}.  The version presented here is for truncated random variables.  

\begin{lemma} \label{lemma:clt}
Let $\{ B^{s} : 1 \leq s \leq r\}$ be a family of $n \times n$ real random matrices.  Let $\{\mathbf{x}_n^{(s)}, \mathbf{y}_n^{(s)} : 1 \leq s \leq r \}$ be a collection of independent $n$-vectors with independent real standardized entries where $\mathbf{y}_n^{(s)} = (y_{nj}^{(s)})_{1 \leq j \leq n}$, $\mathbf{x}_n^{(s)} = (x_{nj}^{(s)})_{1 \leq j \leq n}$, and 
$$ \sup_{n,j} \left( \E|y_{nj}^{(s)}|^4 + \E|x_{nj}^{(s)}|^4 \right) \leq m_4 < \infty $$
for $1 \leq s \leq r$.  Further assume that
\begin{enumerate}[(i)]
\item for each $1 \leq s \leq r$, $B^s, \mathbf{x}_n^{(s)}$, annd $\mathbf{y}_n^{(s)}$ are independent,
\item there exists a sequence $\epsilon_n \rightarrow 0$ such that $\sup_{n,j,s} |x_{nj}^{(s)}| \leq \epsilon_n n^{1/4}$ and $\sup_{n,j,s} |y_{nj}^{(s)}| \leq \epsilon_n n^{1/4}$,
\item there exists a constant $a$ (not depending on $n$) such that $\max_{1 \leq s \leq r} \|B^s\| \leq a$,
\item for each $1 \leq s \leq r$, $\frac{1}{n} \tr \left( (B^s)^\ast B^s \right)$ converges in probability to a number $a_2(s)$.
\end{enumerate}
Then the random $r$-vector 
$$ \mathbf{Z}_n = \frac{1}{\sqrt{n}} \left( \left(\mathbf{x}_n^{(s)}\right)^\mathrm{T} B^s \mathbf{y}_n^{(s)} \right)_{1 \leq s \leq r} $$
converges in distribution as $n \rightarrow \infty$ to an $r$-vector $\mathbf{Z} = (z_s)_{1 \leq s \leq r}$ whose entries are independent normal random variables with mean zero and where the variance of $z_s$ is $a_2(s)$.  
\end{lemma}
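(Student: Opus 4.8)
The plan is to derive Lemma~\ref{lemma:clt} from Theorem~A.5 of \cite{ORS}, which is a joint central limit theorem for a finite family of bilinear forms $\tfrac{1}{\sqrt{n}}(\mathbf{x}_n^{(s)})^{\T}B^s\mathbf{y}_n^{(s)}$, $1\le s\le r$, under the natural independence hypotheses together with a uniform spectral-norm bound on the $B^s$, control of the (conditional) second moments $\tfrac{1}{n}\tr\big((B^s)^{\ast}B^{s'}\big)$, and a Lindeberg-type negligibility condition on the entries of the vectors. The proof therefore has two parts: check that the truncation hypotheses of Lemma~\ref{lemma:clt} imply the hypotheses of Theorem~A.5, and then identify the limiting covariance.

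For the Lindeberg condition I would observe that it holds automatically because of the truncation. Whatever truncation scale $\tau n^{\gamma}$ (with $\gamma\ge 1/4$) appears in Theorem~A.5, hypothesis (ii) supplies the deterministic bounds $|x_{nj}^{(s)}|\le\epsilon_n n^{1/4}$ and $|y_{nj}^{(s)}|\le\epsilon_n n^{1/4}$ with $\epsilon_n\to 0$, so for each fixed $\tau>0$ we have $\epsilon_n n^{1/4}<\tau n^{1/4}\le\tau n^{\gamma}$ once $n$ is large; hence the indicators $\indicator{|x_{nj}^{(s)}|>\tau n^{\gamma}}$ and $\indicator{|y_{nj}^{(s)}|>\tau n^{\gamma}}$ vanish identically for $n$ large and every Lindeberg-type sum is exactly zero eventually. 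Together with the uniform fourth-moment bound $\sup_{n,j}\big(\E|x_{nj}^{(s)}|^4+\E|y_{nj}^{(s)}|^4\big)\le m_4$ (used for the Lyapunov-type estimates inside Theorem~A.5), the independence structure from (i), the bound $\max_s\|B^s\|\le a$ from (iii), and the standardization of the entries, all hypotheses of Theorem~A.5 are in force, and it yields that $\mathbf{Z}_n$ converges in distribution to a mean-zero real Gaussian vector $\mathbf{Z}=(z_s)_{s=1}^r$ (real because the $B^s$ and the vectors are real) whose covariance matrix is the limit in probability of the matrix of conditional covariances of the bilinear forms.

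It remains to compute that matrix. For the diagonal, conditioning on $B^s$ and invoking Lemma~\ref{lemma:trace} gives
\[
\E\big[\, |\tfrac{1}{\sqrt{n}}(\mathbf{x}_n^{(s)})^{\T}B^s\mathbf{y}_n^{(s)}|^2 \mid B^s\,\big]=\tfrac{1}{n}\tr\big(B^s(B^s)^{\T}\big)=\tfrac{1}{n}\tr\big((B^s)^{\ast}B^s\big),
\]
which is bounded by $a^2$ by hypothesis (iii) and converges in probability to $a_2(s)$ by hypothesis (iv); bounded convergence then gives that the variance of $z_s$ is $a_2(s)$. For $s\ne s'$, the bilinear forms labelled $s$ and $s'$ involve disjoint, mutually independent, centered vectors, so conditioning on the whole family $\{B^t\}_{t=1}^r$ factors the conditional expectation of their product into a product of two conditional first moments, each of which vanishes; hence the off-diagonal entries of the limiting covariance are zero. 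Therefore $z_1,\dots,z_r$ are independent mean-zero normal random variables with variances $a_2(1),\dots,a_2(r)$, which is the assertion of the lemma.

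The only genuinely delicate ingredient is the one already built into Theorem~A.5 of \cite{ORS}: the coefficient matrices $B^s$ are themselves random (in the application of Section~\ref{sec:finite} they are in fact all equal to a single random matrix assembled from the resolvent $R_{n,l}$), so one cannot simply condition on them and quote a classical CLT with deterministic coefficients. Theorem~A.5 handles this by combining a conditional Lindeberg CLT given the $B^s$ with a concentration argument for the random conditional covariances $\tfrac{1}{n}\tr\big((B^s)^{\ast}B^{s'}\big)$ around their deterministic limits, and the truncation and fourth-moment bounds in Lemma~\ref{lemma:clt} are exactly what make the error terms in that scheme negligible. Since this machinery is already available in \cite{ORS}, the present proof consists only of the hypothesis verification and covariance computation outlined above.
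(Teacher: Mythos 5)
Your proposal matches the paper's treatment: the paper gives no proof of Lemma~\ref{lemma:clt} beyond observing that it is a corollary of Theorem~A.5 of \cite{ORS} stated for truncated random variables, which is exactly the reduction you carry out. Your added details---the Lindeberg-type hypothesis being trivially satisfied by the truncation at $\epsilon_n n^{1/4}$, the variance identification via Lemma~\ref{lemma:trace} and hypothesis (iv), and the vanishing of off-diagonal covariances by independence of the vectors---are consistent with that citation-based argument.
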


\end{document}